\newcommand{\comment}[1]{}
\newtheorem{thm}{Theorem}[section]
\newtheorem{cor}[thm]{Corollary}
\newtheorem{lem}[thm]{Lemma}
\newtheorem{prop}[thm]{Proposition}
\newtheorem{rem}[thm]{Remark}
\newtheorem{probl}{Problem}
\newenvironment{proof}{\noindent {\bf Proof.}}{$\Box$\\}
\newcommand{\fd}{{\rm fd}}
\newcommand{\rep}{{\rm rep}}
\newcommand{\reg}{\alpha_{\rm reg}}
\newcommand{\kreg}{\alpha_{k-{\rm reg}}}
\newcommand{\jreg}{\alpha_{j-{\rm reg}}}
\newcommand{\tworeg}{\alpha_{2-{\rm reg}}}
\newcommand{\onereg}{\alpha_{1-{\rm reg}}}
\begin{document}

\begin{center}
{\Large \bf Regular independent sets}\\[5ex]

\begin{multicols}{2}

Yair Caro\\[1ex]
{\small Dept. of Mathematics and Physics\\
University of Haifa-Oranim\\
Tivon 36006, Israel\\
yacaro@kvgeva.org.il}

\columnbreak

Adriana Hansberg\\[1ex]
{\small Instituto de Matem\'aticas\\
UNAM Juriquilla\\
76230 Quer\'etaro, Mexico\\
ahansberg@im.unam.mx}\\[2ex]

\end{multicols}

Ryan Pepper\\[1ex]
{\small University of Houston-Downtown\\
Houston, Texas 77002\\
pepperr@uhd.edu}
\end{center}

\begin{abstract}
The regular independence number, introduced by Albertson and Boutin in 1990, is the size of a largest set of independent vertices with the same degree.  Lower bounds were proven for this invariant, in terms of the order, for trees and planar graphs.  In this article, we generalize and extend these results to find lower bounds for the regular $k$-independence number for trees, forests, planar graphs, $k$-trees and $k$-degenerate graphs.\\[1ex]
\noindent
{\it Keywords:} independence, $k$-independence, regular independence, planar graphs, $k$-degenerate graphs, $k$-trees\\
AMS subject classification: 05C69\\
\end{abstract}

\noindent
\section{Introduction and benchmark bounds}

Albertson and Boutin \cite{AlBo} introduced the parameter $\reg(G)$ as the maximum cardinality of an independent set in a graph $G$ in which all vertices have equal degree in $G$. An independent set whose vertices all have equal degree in $G$ is called a \emph{regular independent set}. %We will write RIS for a regular independent set and MRIS for a maximum regular independent set. 
A \textit{$k$-independent set} is a set of vertices whose induced subgraph has maximum degree at most $k$. Let us define the \textit{regular $k$-independence number}, denoted $\alpha_{k-reg}(G)$, as the maximum cardinality of a $k$-independent set of vertices which have the same degree in $G$.  More particularly, we denote by $\alpha_{k,j}(G)$ the maximum cardinality of a $k$-independent set in the subgraph induced by the vertices of degree $j$ in $G$.  Thus, $\alpha_{k-reg}(G) = \max \{\alpha_{k,j}(G) \,: \delta \leq j \leq \Delta\}$, where $\delta$ is the minimum degree and $\Delta$ is the maximum degree. When $k=0$,  $\alpha_{0-reg}(G) = \alpha_{reg}(G)$ and, for regular graphs, $\reg(G) = \alpha(G)$ and $\kreg(G) = \alpha_k(G)$. Albertson and Boutin \cite{AlBo} proved the following:
\begin{enumerate}
\item[(1)] If $G$ is a planar graph on $n$ vertices, then $\reg(G) \ge \frac{2}{65}n$.
\item[(2)] If $G$ is a maximal planar graph on $n$ vertices, then $\reg(G) \ge \frac{3}{61}n$.
\item[(3)] If $T$ is a tree on $n$ vertices, then $\reg(T) \ge \frac{n+2}{4}$ and this is sharp.
\item[(4)] If $G$ is a connected graph on $n$ vertices and maximum degree $\Delta$, then $\reg(G) \ge \frac{n}{{{\Delta+1}\choose 2}}$ and this is sharp.
\end{enumerate}

They left open the problems whether the results in (1) and (2) are best possible and constructed an example of a maximal planar graph $G$ for which $\reg(G) = \frac{n}{16}$. Our intention is to extend the issue raised by Albertson and Boutin in two directions. We extend the question to broader families of graphs, including forests, $k$-trees and $k$-degenerate graphs, while on the way improving item~(1) for minimum degree $\delta = 2,3,4,5$ and item~(2) for minimum degree $\delta = 4, 5$. These results are summarized in Table \ref{table-og} in what follows. We also extend the problem from regular independent sets to regular $k$-independent sets, in view to some recent results about the $k$-independence number \cite{CaHa, HaPe}. We mention in passing that related papers were written (that came from distinct inspiration), see e.g. \cite{AlBe, AlMuTh, BieWil, BoDuWo, CaWe}, from which some lower bounds on the regular $k$-independence number can be obtained, but which are inferior to the lower bound obtained by the current approach.

%also using induced forests and its many interesting descendants, which, in conjunction with \cite{CaWe} and averaging arguments, give some results which are still weaker than the current approach.

The parameter $\reg(G)$ is closely related to a newly introduced parameter \cite{CHH}, the fair domination number $\fd(G)$. A \emph{fair dominating set} is a set $S \subseteq V(G)$ such that all vertices $v \in V(G) \setminus S$ have exactly the same non-zero number of neighbors in $S$. The \emph{fair domination number} $\fd(G)$ is the cardinality of a minimum fair dominating set of $G$. When $\delta(G) \ge 1$ and $R$ is a maximum regular independent set of $G$, then $V(G) \setminus R$ is a fair dominating set of $G$ and hence $\fd(G) \le n - \reg(G)$. Hence, any lower bound on $\reg(G)$ yields an upper bound on $\fd(G)$ (and any lower bound on $\fd(G)$ yields an upper bound on $\reg(G)$).

The lower bound given in the proposition presented below, together with the above mentioned results of Albertson and Boutin, serves as a benchmark to our work. A few more definitons are necessary before proceeding. The \emph{repetition number} of $G$, denoted $\rep(G)$, was introduced in \cite{CaWe} and is defined as the maximum number of vertices with equal degree, while $\chi_k(G)$ is the \emph{$k$-chromatic number of $G$}, i.e. the minimum number of colors needed to color the vertices of the graph $G$ such that the graphs induced by the vertices of each color class have maximum degree at most $k$. Note that $\chi_0(G)$ is the well-known chromatic number $\chi(G)$.

\begin{prop}\label{prop1} %prop1
Let $G$ be a graph with average degree $d$ and minimum degree $\delta$. Then
\[
\kreg(G) \ge \frac{n}{(2 d - 2\delta +1) \chi_k(G)}.
\]
\end{prop}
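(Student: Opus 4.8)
The plan is to split the bound into two independent estimates and multiply them: a degree-sequence estimate producing many vertices of a common degree, followed by a colouring estimate extracting a large $k$-independent subset from among them. Concretely, I would first establish
\[
\kreg(G) \ge \frac{\rep(G)}{\chi_k(G)},
\]
and then bound $\rep(G)$ from below by $\frac{n}{2d-2\delta+1}$; combining the two inequalities gives the proposition.

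For the first inequality, let $j^\ast$ be a degree value attained by exactly $\rep(G)$ vertices, and let $V_{j^\ast}$ be that set. Fix an optimal $k$-colouring of $G$ with $\chi_k(G)$ colours, so that every colour class induces a subgraph of maximum degree at most $k$. Restricting this colouring to $V_{j^\ast}$ partitions it into at most $\chi_k(G)$ parts, each of which is $k$-independent, since an induced subgraph of a graph of maximum degree at most $k$ again has maximum degree at most $k$. By pigeonhole one part has at least $\rep(G)/\chi_k(G)$ vertices, and as all of its vertices have degree $j^\ast$ it is a $k$-independent set witnessing $\alpha_{k,j^\ast}(G) \ge \rep(G)/\chi_k(G)$. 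Since $\kreg(G) = \max_j \alpha_{k,j}(G)$, the inequality follows.

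For the repetition-number bound I would argue from the degree sum $\sum_{v} \deg(v) = dn$ together with $\delta(G)=\delta$. Write $r = \rep(G)$ and sort the degrees as $d_1 \le d_2 \le \cdots \le d_n$. Because no value is attained more than $r$ times, the $i$-th smallest degree satisfies $d_i \ge \delta + \lfloor (i-1)/r \rfloor$. Summing over $i$ gives $dn = \sum_{i=1}^{n} d_i \ge n\delta + \sum_{i=1}^{n}\lfloor (i-1)/r\rfloor$, and evaluating the right-hand sum (which in the case $r \mid n$ equals $\tfrac{n}{2}\bigl(\tfrac{n}{r}-1\bigr)$) yields $\tfrac{n}{r} \le 2d - 2\delta + 1$, that is, $r \ge \frac{n}{2d-2\delta+1}$.

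The pigeonhole/colouring step is routine; the main obstacle is making the degree-sum estimate tight enough to produce the exact constant $2d-2\delta+1$ rather than a weaker one. In particular the floor terms $\lfloor (i-1)/r\rfloor$ and the possibly incomplete final block of size $n-r\lfloor n/r\rfloor < r$ must be handled so the packing inequality is not lost: the clean case $r \mid n$ gives the constant immediately, and I would verify that the partial-block case only improves the estimate. Once the constant is secured, substituting the repetition bound into $\kreg(G) \ge \rep(G)/\chi_k(G)$ completes the proof.
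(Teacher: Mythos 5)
Your proposal is correct and follows essentially the same route as the paper: both first establish $\kreg(G) \ge \rep(G)/\chi_k(G)$ by restricting a $\chi_k(G)$-colouring to a largest degree class and applying pigeonhole, and then invoke $\rep(G) \ge n/(2d-2\delta+1)$. The only difference is that the paper cites this repetition-number bound from Caro and West, whereas you re-derive it; your packing argument (including the check that an incomplete final block of size $s$ only adds $s(r-s)/(2r) \ge 0$ to the degree sum) is sound.
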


\begin{proof}
Let $G_i$ be the subgraph of $G$ induced by all vertices of degree $i$, $\delta \le i \le \Delta$. Then we have
\[\kreg(G)  = \max \{\alpha_{k,j}(G) \,: \delta \leq j \leq \Delta\}
          \ge \max \left\{\frac{|V(G_j)|}{\chi_k(G_j)} \,:\; \delta \leq j \leq \Delta \right\},\]
since, for every $j$, $\alpha_{k,j}(G) \ge \frac{|V(G_j)|}{\chi_k(G_j)}$ holds. Hence, with
 \[\max \left\{\frac{|V(G_j)|}{\chi_k(G_j)} \,:\, \delta \leq j \leq \Delta \right\}
 \ge \max \left\{\frac{\rep(G)}{\chi_k(G_j)} \,:\, |V(G_j)| = \rep(G),\; \delta \leq j \leq \Delta\right\} \ge\frac{\rep(G)}{\chi_k(G)}
 \]
and the bound $\rep(G) \ge \frac{n}{2 d - 2\delta +1}$ proved in \cite{CaWe}, the result follows.
\end{proof}

The comparison of the benchmark results obtained in Proposition \ref{prop1} and the outcome of our more intricate approach are presented in Table \ref{table-og} and Table \ref{table-p-op-2} later on.

Definitions for terms not defined in the introduction but necessary for the paper appear in the sections that they are needed. This paper is organized as follows. After this introduction section, in Section 2 we deal with trees and forests, where we generalize and extend the results of Albertson and Boutin in \cite{AlBo} to $\kreg(G)$. In Section 3, we present a lower bound on $\reg(G)$ for $k$-trees (observe that no planar graph is a $4$-tree and planar graphs which are $3$-trees are called Apollonian networks) and give analogous results for $k$-degenerate graphs (observe that all planar graphs are at most $5$-degenerate graphs) and some specific results about planar graphs, refining and elaborating hereby the method used in \cite{AlBo} and generalizing and improving the bounds on $\reg(G)$ given there, too. In Section 4 we give lower bounds on $\tworeg(G)$ for planar and outerplanar graphs by incorporating ideas from defective colorings \cite{CoCoWo, CoGoJe} and a partition theorem due to Lov\'asz \cite{Lov}. In Section 5, we analyze complexity issues of regular $k$-indenpendence and we finish with a collection of open problems on regular $k$-independent sets in Section 6.

%2) Trees and Forests ( Prop A.)
\section{Trees and forests}

In this section, we generalize and extend the result that $\alpha_{reg}(T)\geq \frac{n+2}{4}$ for any tree $T$, obtained by Albertson and Boutin in \cite{AlBo}, to regular $k$-independence number in both trees and forests. Here and throughout, we will use $n_i(G)$ to denote the number of vertices of degree $i$ in $G$.  When the context is clear, $n_i(G)$ will be abbreviated to $n_i$. As usual, a \emph{leaf} of a tree $T$ is a vertex of degree $1$ in $T$ and its neighbor is called its \emph{support vertex}.

\begin{thm}\label{thm-trees} %Proposition A:
For every tree $T$ on $n \geq 2$ vertices, \\[-4ex]
\begin{enumerate}
\item[(i)] $\reg(T) \ge \frac{n+2}{4}$ (Albertson and Boutin \cite{AlBo}),
\item[(ii)] $\onereg(T) \ge \frac{2(n+2)}{7}$, and
\item[(iii)] $\kreg(T) \ge \frac{n+2}{3}$ for $k \ge 2$.\\[-4ex]
\end{enumerate}
Moreover, all bounds are sharp.
\end{thm}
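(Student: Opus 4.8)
The plan is to bound $n$ from above in terms of $\kreg(T)$ using only three ingredients: the independence of the leaf set, the path structure of the degree-$2$ vertices, and the elementary leaf identity for trees. Throughout write $R=\kreg(T)$ and recall that $n_i$ denotes the number of vertices of degree $i$. First I would record the two counting identities $\sum_{i\ge 1}n_i=n$ and $\sum_{i\ge 1} i\,n_i=2(n-1)$, whose difference yields
\[
\sum_{j\ge 3}(j-2)\,n_j=n_1-2 .
\]
Since $j-2\ge 1$ for every $j\ge 3$, this immediately gives $\sum_{j\ge 3}n_j\le n_1-2$, and hence the master inequality $n\le 2n_1+n_2-2$.

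The second step is to bound $n_1$ and $n_2$ by $R$. For $n\ge 3$ any two leaves are non-adjacent (an edge joining two leaves would be a whole component, contradicting connectivity), so the leaf set is an independent, in particular $k$-independent, set of vertices all of degree $1$; thus $n_1\le\alpha_{k,1}(T)\le R$. For the degree-$2$ vertices, the subgraph they induce is a subforest of $T$ of maximum degree at most $2$, i.e.\ a disjoint union of paths. Here the three cases diverge through the $k$-independence number of a path $P_m$: an independent set has size at least $m/2$; a dissociation (i.e.\ $1$-independent) set has size $\lceil 2m/3\rceil\ge 2m/3$; and for $k\ge 2$ the whole path is already $k$-independent since its maximum degree is at most $2\le k$. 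Summing over the path-components gives $\alpha_{0,2}(T)\ge n_2/2$, $\alpha_{1,2}(T)\ge 2n_2/3$, and $\alpha_{k,2}(T)=n_2$ for $k\ge 2$, so via $\alpha_{k,2}(T)\le R$ we obtain $n_2\le 2R$, $n_2\le\tfrac32 R$, and $n_2\le R$ respectively.

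Plugging these into $n\le 2n_1+n_2-2\le 2R+c_kR-2$, with $c_0=2$, $c_1=\tfrac32$, and $c_k=1$ for $k\ge 2$, yields $n\le 4R-2$, $n\le\tfrac72 R-2$, and $n\le 3R-2$, which rearrange to the three claimed bounds; the case $n=2$ (where the two leaves are adjacent) is checked directly. This reproduces the Albertson--Boutin bound (i) as the $k=0$ instance and extends it uniformly, with all the case-dependence isolated in the single constant $c_k$.

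For sharpness I would exhibit, for each $k$, an explicit family of trees forcing equality. The equality conditions read off from the proof dictate the extremal degree sequence: $n_1=R$ leaves, $n_2=c_kR$ degree-$2$ vertices arranged into paths on which the relevant $k$-independence number is exactly $c_kn_2$ (even-length paths for $k=0$, lengths divisible by $3$ for $k=1$, arbitrary for $k\ge 2$), and $n_3=R-2$ degree-$3$ vertices (forcing the $j=2$ equality in $\sum_{j\ge 3}(j-2)n_j=n_1-2$), with no vertex of degree at least $4$. The main delicacy is to realise these degree sequences as genuine trees in which, simultaneously, \emph{every} degree class has $k$-independence number at most $R$, so that $\kreg$ is attained exactly and is not exceeded by the degree-$2$ or degree-$3$ class; I would build such trees by hanging the prescribed pendant paths and leaves onto a small backbone and verify the independence counts componentwise. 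I expect this verification of the extremal examples, rather than the lower bound itself, to be the main obstacle.
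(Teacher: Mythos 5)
Your lower-bound argument is correct, and it is essentially the paper's proof in a slightly cleaner package: the paper uses exactly the same three ingredients (the identity giving $n_2\ge n-2n_1+2$, the independence of the leaves, and the fact that the degree-$2$ vertices induce a union of paths with $k$-independence ratio $\tfrac12$, $\tfrac23$, or $1$), but organizes them as a dichotomy ("if $n_1$ is already large enough, take the leaves; otherwise $n_2$ is large and take the degree-$2$ class"), whereas you bound $n_1\le R$ and $n_2\le c_kR$ directly and add. The two are arithmetically equivalent; your version has the small advantage of deriving (i), (ii), (iii) uniformly from one master inequality $n\le 2n_1+n_2-2$, and of making the extremal degree sequences ($n_1=R$, $n_2=c_kR$, $n_3=R-2$, no higher degrees) drop out as the equality conditions. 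Your identification of those degree sequences is exactly right.

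The one genuine incompleteness is the sharpness claim, which is part of the theorem and which you only sketch, flagging the realization of the extremal degree sequences as the "main obstacle." It is not an obstacle, and the paper's constructions show why: for (iii), take a path on $2p+4$ vertices and attach one pendant leaf to each of the first $p$ even-indexed vertices; this gives $n_1=n_2=n_3+2=p+2$, $n=3p+4$, and since $\alpha_{k,3}\le n_3=R-2<R$ no degree class can exceed $R=p+2$. For (ii), join the two endpoints of a path on $3(p+1)$ vertices to the unique degree-$2$ vertices of two copies of a tree with only degrees $1$, $2$, $3$ and a single degree-$2$ vertex (built from $P_3$ by repeatedly adding two leaves to a leaf); the degree-$2$ class of the resulting tree is an induced path of order divisible by $3$, so $\alpha_{1,2}=\tfrac23 n_2=R$, while $\alpha_{1,1}=n_1=R$ and $\alpha_{1,3}\le n_3=R-2$. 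In both cases the verification that every degree class has $k$-independence number at most $R$ reduces to the inequality $n_3\le R-2$ you already isolated, plus the observation that vertices of degree at least $4$ are absent. You should write out at least one such family per item to close the proof of the "Moreover, all bounds are sharp" clause; as stated, your proposal proves the inequalities but not the sharpness.
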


\begin{proof}
Let $T$ be a tree on $n \geq 2$ vertices. Item (i) is proved, and the sharpness is demonstrated, in \cite{AlBo}. Is is only mentioned here for completeness.

To prove (ii), we first assume that $n_1 \geq \frac{2(n+2)}{7}$.  Since the subgraph induced by the vertices of degree one is a $1$-independent set, $\alpha_{1,1} = n_1$ and we are done.  Next, we assume that $n_1 < \frac{2(n+2)}{7}$.  Denote by $N_3$ the set of vertices in $T$ of degree at least $3$. Then $n_1 \geq N_3 +2$ and $n_2=n-n_1-N_3$ and so $n_2 \geq n - 2n_1 +2$. Finally, since the vertices of degree two in $T$ induce a collection of paths, and the regular $1$-independence number of a path is at least $\frac{2}{3}$ of its order, $\alpha_{1,2}(T) \geq \frac{2}{3}n_2$. Thus, we complete the proof as follows:
\[
\onereg(T) \geq \alpha_{1,2}(T) \geq \frac{2}{3}n_2 \geq \frac{2}{3}(n - 2n_1 +2) > \frac{2}{3} \left(n - \frac{4(n+2)}{7} +2\right) = \frac{2(n+2)}{7}.
\]
To see that this bound is sharp, let $p$ be a positive integer and $n=7p+5$. Start with a path $P$ with $\frac{3n+6}{7}$ vertices and two copies, $T_1$ and $T_2$, of a tree with $\frac{2n-3}{7}$ vertices which has a unique degree two vertex and maximum degree at most three (these trees exist since we can start with a path on three vertices and continually add two leaves to an endpoint). Now join one endpoint of $P$ to the unique degree-two vertex of one $T_1$ and the other endpoint of $P$ to the unique degree two vertex of $T_2$. Now, for this new tree we constructed, $n_1=\frac{2n+4}{7}$, $n_2=\frac{3n+6}{7}$, $n_3=\frac{2n-10}{7}$, and $\onereg=\alpha_{1,1}=\alpha_{1,2}=\frac{2n+4}{7}$ -- which shows the bound is sharp (see Figure \ref{fig-trees-ii}).

\begin{figure}[h]
\begin{center}
\psset{unit=0.9cm, linewidth=0.03cm}
   \begin{pspicture}(-2.5,0)(13,5.3)
   \cnode[fillstyle=solid,fillcolor=lightgray](1.5,2){0.13}{a1}\put(1.4,2.3){\footnotesize $1$}
      \cnode[fillstyle=solid,fillcolor=lightgray](2.5,2){0.13}{a2}\put(2.4,2.3){\footnotesize $2$}
      \cnode(3.5,2){0.13}{a3}\put(3.4,2.3){\footnotesize $3$}
      \cnode[fillstyle=solid,fillcolor=lightgray](4.5,2){0.13}{a4}\put(4.4,2.3){\footnotesize $4$}
      \cnode[fillstyle=solid,fillcolor=lightgray](5.5,2){0.13}{a5}\put(5.4,2.3){\footnotesize $5$}
      \cnode[fillstyle=solid,fillcolor=lightgray](7,2){0.13}{a7}\put(6.4,2.3){\footnotesize $3p+1$}
      \cnode[fillstyle=solid,fillcolor=lightgray](8,2){0.13}{a8}\put(7.5,2.3){\footnotesize $3p+2$}
      \cnode(9,2){0.13}{a9}\put(8.6,2.3){\footnotesize $3p+3$}
      \cnode(5.8,2){0}{x1}
      \cnode(6.7,2){0}{x2}
      
       \ncline{a1}{a2}
      \ncline{a2}{a3}
      \ncline{a3}{a4}
      \ncline{a4}{a5}
      \ncline{a5}{x1}
      \ncline{x2}{a7}
      \ncline[linestyle=dotted]{x1}{x2}
      \ncline{a6}{a7}
      \ncline{a7}{a8}
      \ncline{a8}{a9}
      
      %T1
      \cnode(0.5,2){0.13}{v1}
      \cnode(-0.5,2.5){0}{y1}
      \cnode(-0.5,1.5){0}{y2}
      \psframe[framearc=0.2](-2,0.5)(-0.2,3.5)\put(-1.9,0.2){\footnotesize vertices of}\put(-1.7,-0.1){\footnotesize degree $3$}
      \cnode*(-2.5,3){0.13}{j1}\put(-2.9,2.9){\footnotesize $1$}
      \cnode*(-2.5,2.5){0.13}{j2}\put(-2.9,2.4){\footnotesize $2$}
      \cnode*(-2.5,1){0.13}{j3}\put(-2.9,0.6){\footnotesize $p+1$}
      \cnode(-1.5,3){0}{k1}
      \cnode(-1.5,2.5){0}{k2}
      \cnode(-1.5,1){0}{k3}
      \ncline{v1}{a1}
      \ncline{v1}{y1}
      \ncline{v1}{y2}
      \ncline{k1}{j1}
      \ncline{k2}{j2}
      \ncline{k3}{j3}
      \ncline[linestyle=dotted,dotsep=5pt]{j2}{j3}
   
    %T2
      \cnode(10,2){0.13}{v2}
      \cnode(11,2.5){0}{z1}
      \cnode(11,1.5){0}{z2}
      \psframe[framearc=0.2](10.7,0.5)(12.5,3.5)\put(10.8,0.2){\footnotesize vertices of}\put(11,-0.1){\footnotesize degree $3$}
      \cnode*(13,3){0.13}{l1}\put(13.2,2.9){\footnotesize $1$}
      \cnode*(13,2.5){0.13}{l2}\put(13.2,2.4){\footnotesize $2$}
      \cnode*(13,1){0.13}{l3}\put(12.8,0.6){\footnotesize $p+1$}
      \cnode(12,3){0}{m1}
      \cnode(12,2.5){0}{m2}
      \cnode(12,1){0}{m3}
      \ncline{v2}{a9}
      \ncline{v2}{z1}
      \ncline{v2}{z2}
      \ncline{l1}{m1}
      \ncline{l2}{m2}
      \ncline{l3}{m3}
      \ncline[linestyle=dotted,dotsep=5pt]{l2}{l3}
      
      \psline[linewidth=0.02cm]{|-|}(-3,4)(0.9,4)\put(-1.3,4.2){$T_1$}
      \psline[linewidth=0.02cm]{|-|}(1.1,4)(9.4,4)\put(5,4.2){$P$}
      \psline[linewidth=0.02cm]{|-|}(9.6,4)(13.5,4)\put(11.5,4.2){$T_2$}
      
   \end{pspicture}
\caption{ \label{fig-trees-ii} Trees with $\onereg(T) = \frac{2(n(T)+2)}{7}$. The gray and the black vertices each form a different  $\onereg(T)$-set.}
\end{center}
\end{figure}

To prove (iii), let $k \geq 2$ and assume first that $n_1 \geq \frac{n+2}{3}$.  Since the subgraph induced by the vertices of degree one is a $1$-independent set, and therefore also a $k$-independent set, $\alpha_{k-reg} \geq \alpha_{k,1} = n_1$, and so we are done in this case.  Next, we assume that $n_1 < \frac{n+2}{3}$.  Now, we have as before that $n_2 \geq n - 2n_1 +2$, since $n_1 \geq N_3 +2$ and $n_2=n-n_1-N_3$.  Finally, since the subgraph induced by the vertices of degree two is a $2$-independent set:
\[
\alpha_{k-reg}(T) \geq \alpha_{k,2}(T) = n_2 \geq n - 2n_1 + 2 \geq n - \frac{2n+4}{3} + 2 = \frac{n+2}{3}.
\]
To see that this bound is sharp, consider the following tree. Let $p$ be a non-negative integer. Starting with a path on $r=2p+4$ vertices labeled $1,2,\ldots,r$, attach a leaf (or pendant vertex) to each of the first $p$ vertices with even labels (the first graph in this family is the path on four vertices).  This family of trees has the following properties; $n_1=n_2=n_3+2=p+2$, $n=3p+4$, and $\alpha_{k-reg} = \alpha_{k,1} = \alpha_{k,2} = p+2 = \frac{n+2}{3}$ -- which shows the bound is sharp (see Figure \ref{fig-trees-iii}).
\end{proof}

\begin{figure}[h]
\begin{center}
\psset{unit=1cm, linewidth=0.03cm}
   \begin{pspicture}(-1,0.7)(11,3.5)
      \cnode*(0,2){0.13}{a1}\put(-0.1,2.3){\footnotesize $1$}
      \cnode(1,2){0.13}{a2}\put(0.9,2.3){\footnotesize $2$}
      \cnode[fillstyle=solid,fillcolor=lightgray](2,2){0.13}{a3}\put(1.9,2.3){\footnotesize $3$}
      \cnode(3,2){0.13}{a4}\put(2.9,2.3){\footnotesize $4$}
      \cnode[fillstyle=solid,fillcolor=lightgray](4,2){0.13}{a5}\put(3.9,2.3){\footnotesize $5$}
      \cnode(6,2){0.13}{a6}\put(5.8,2.3){\footnotesize $2p$}
      \cnode[fillstyle=solid,fillcolor=lightgray](7,2){0.13}{a7}
      \cnode[fillstyle=solid,fillcolor=lightgray](8,2){0.13}{a8}
      \cnode[fillstyle=solid,fillcolor=lightgray](9,2){0.13}{a9}
      \cnode*(10,2){0.13}{a10}
      \cnode*(1,1){0.13}{a11}
      \cnode*(3,1){0.13}{a12}
      \cnode*(6,1){0.13}{a13} 
      \cnode(4.3,2){0}{x1}
      \cnode(5.7,2){0}{x2}
      
      \ncline{a1}{a2}
      \ncline{a2}{a3}
      \ncline{a3}{a4}
      \ncline{a4}{a5}
      \ncline{a5}{x1}
      \ncline[linestyle=dotted]{x1}{x2}
      \ncline{x2}{a6}
      \ncline{a6}{a7}
      \ncline{a7}{a8}
      \ncline{a8}{a9}
      \ncline{a9}{a10}
      \ncline{a2}{a11}
      \ncline{a4}{a12}
      \ncline{a6}{a13}

\end{pspicture}
\caption{ \label{fig-trees-iii} Trees with $\kreg(T) = \frac{n(T)+2}{3}$ for $k \ge 2$. The gray and the black vertices each form a different  $\kreg(T)$-set.}
\end{center}
\end{figure}

\newpage
\begin{thm}\label{thm-forests}
For every forest $F$, \\[-4ex]
\begin{enumerate}
\item[(i)] $\reg(F) \ge \frac{n+2}{5}$,
\item[(ii)] $\onereg(F) \ge \frac{2(n+2)}{9}$, and
\item[(iii)] $\kreg(F) \ge \frac{n+2}{4}$ for $k \ge 2$.\\[-4ex]
\end{enumerate}
Moreover, all bounds are sharp.
\end{thm}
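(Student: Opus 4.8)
The plan is to mimic the tree argument of Theorem~\ref{thm-trees}, reducing everything to the three ``flat'' degree classes of vertices of degree $0$, $1$, and $2$ and showing that at least one of them already contains a large regular $k$-independent set. Writing $M = \kreg(F)$, the basic observations are as follows. The $n_0$ vertices of degree $0$ induce an edgeless graph, so $\alpha_{k,0}(F) = n_0$ for every $k \ge 0$. The $n_1$ vertices of degree $1$ induce a graph of maximum degree at most $1$ (a matching plus isolated vertices, where the matching edges are exactly the $K_2$-components of $F$), so $\alpha_{k,1}(F) = n_1$ for $k \ge 1$, while $\alpha_{0,1}(F) = n_1 - m$, where $m$ is the number of $K_2$-components. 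The $n_2$ vertices of degree $2$ induce a disjoint union of paths, for which I will use $\alpha_{0,2}(F) \ge \tfrac12 n_2$, $\alpha_{1,2}(F) \ge \tfrac23 n_2$ (the path bound already invoked in Theorem~\ref{thm-trees}), and $\alpha_{k,2}(F) = n_2$ for $k \ge 2$. In every case $M$ bounds $n_0, n_1, n_2$ from above, with the precise relation depending on $k$.

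The counting step replaces the tree identity $n_1 \ge N_3 + 2$. A forest with $c$ components has $n - c$ edges, so the handshake relation gives $\sum_i (2-i)\,n_i = 2c$, that is $\sum_{i \ge 3}(i-2)\,n_i = 2n_0 + n_1 - 2c$, and hence $N_3 \le 2n_0 + n_1 - 2c$. The structural point that renders the isolated vertices harmless is that each isolated vertex and each $K_2$ is its own component: writing $c = n_0 + m + c_+$, where $c_+$ is the number of components on at least three vertices, this becomes $N_3 \le n_1 - 2m - 2c_+$. Substituting into $n = n_0 + n_1 + n_2 + N_3$ yields the master inequality $n \le n_0 + 2n_1 + n_2 - 2m - 2c_+$.

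It then remains to insert the degree-class bounds. For $k \ge 2$ one has $n_0, n_1, n_2 \le M$, giving $n \le 4M - 2m - 2c_+$; for $k = 1$ one has $n_0, n_1 \le M$ and $n_2 \le \tfrac32 M$, giving $n \le \tfrac92 M - 2m - 2c_+$; and for $k = 0$ one has $n_0 \le M$, $n_1 \le M + m$, and $n_2 \le 2M$, giving $n \le 5M - 2c_+$ (the extra $m$ from the matching loss in $\alpha_{0,1}$ cancels the $-2m$). Whenever $c_+ \ge 1$, the term $-2c_+ \le -2$ produces exactly the ``$+2$'' in the stated bounds $\tfrac{n+2}{4}$, $\tfrac{2(n+2)}{9}$, and $\tfrac{n+2}{5}$.

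The one genuinely separate case, and the main technical nuisance, is $c_+ = 0$: here $F$ is a disjoint union of isolated vertices and $K_2$'s, so $n_2 = N_3 = 0$ and $n = n_0 + 2m$, and one checks directly that $\max\{n_0,\alpha_{k,1}(F)\}$ already exceeds the required fraction of $n$ for every $k$ via a short case split on whether $n_0$ or the leaf count dominates. Apart from this degenerate situation, the only real subtlety is the bookkeeping of the $K_2$-components, which enter twice — as a loss in $\alpha_{0,1}$ and as a gain in the component count — and cancel. Sharpness will finally be exhibited by explicit forests formed by taking the extremal trees of Theorem~\ref{thm-trees} (or suitable path/spider gadgets) in disjoint union with an appropriate number of isolated vertices and $K_2$'s, chosen so that the inequalities above hold with equality; computing the degree sequence and the value of $\kreg$ for these families is then routine.
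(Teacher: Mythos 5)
Your argument is correct and follows essentially the same route as the paper: your key inequality $N_3 \le n_1 - 2m - 2c_+$ is exactly the paper's Equation~(\ref{r2}) with $t=m$ and $r=c_+$ (obtained there by counting leaves per component rather than via the handshake lemma), and the rest is the same reduction to the degree classes $0$, $1$, $2$, with your single master inequality $n \le n_0 + 2n_1 + n_2 - 2m - 2c_+$ repackaging the paper's threshold-based case analysis -- in particular, the cancellation of the $K_2$-bookkeeping that you make explicit is what the paper achieves through the term $\frac{n+2+5t}{5}$ in part (i). The only piece you leave unfinished is sharpness, which you defer as routine; your proposed constructions (extremal trees of Theorem~\ref{thm-trees} in disjoint union with suitably many isolated vertices) do check out and are essentially the families the paper exhibits.
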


\begin{proof} Let $F$ be a forest with $n$ vertices, $n_0$ isolated vertices, $t$ isolated edges (isolated complete graphs on two vertices) and $r$ trees with at least three vertices.  If $n\leq 2$, all three parts are trivially true, so we may assume throughout the proof that $n \geq 3$.

To prove (i), we first suppose $r=0$, which in turn implies that $n=n_0 + 2t$. Now, if $n_0 > \frac{n}{3}$, we are done since the degree-zero vertices form an independent set and $\frac{n}{3} \geq \frac{n+2}{5}$ when $n \geq 3$. So assume $n_0 \leq \frac{n}{3}$.  This implies $2t = n - n_0 \geq n - \frac{n}{3} = \frac{2n}{3}$ so that $t \geq \frac{n}{3}$.  Now, since taking one vertex of degree one from each of the $t$ isolated edges yields an independent set, $\alpha_{reg} \geq t \geq \frac{n}{3} \geq \frac{n+2}{5}$ when $n \geq 3$. This proves (i) when $r=0$, so we may assume that $r \geq 1$. 

Let $T_1, \ldots, T_r$ be the $r$ components of $F$ with at least three vertices and let $t_i$ denote the number of vertices of degree one in $T_i$ and let $N_3$ be the number of vertices of degree three or more in $F$. Then, 
\begin{equation}\label{r1}
n=n_0+n_1+n_2+N_3 = n_0 +2t + \sum_{i=1}^{r} t_i + n_2 + N_3.
\end{equation}
Note that in each component tree $T_i$, the number of vertices of degree at least three is at most $t_i -2$, with equality holding if and only if no vertices in that component have degree four or more -- equivalently, $n_3=N_3$. Hence,
\begin{equation}\label{r2}
N_3 \leq \sum_{i=1}^r (t_i - 2) = \sum_{i=1}^r t_i - 2r = n_1 - 2(t+r).
\end{equation}
Combining Equations \ref{r1} and \ref{r2},
\[
n=n_0+n_1+n_2+N_3 \leq n_0 + 2n_1 + n_2 - 2(t+r)
\]
so that,
\begin{equation}\label{r3}
n_2 \geq n - n_0 - 2n_1 +2(t+r).
\end{equation}

Since the degree-zero vertices are independent, if $n_0 > \frac{n+2}{5}$, we are done -- so assume $n_0 \leq \frac{n+2}{5}$.  Next, if $n_1 > \frac{n+2+5t}{5} = \frac{n+2-5t}{5}+2t$, then there are at least $\frac{n+2-5t}{5}+t = \frac{n+2}{5}$ independent vertices of degree one, implying $\alpha_{reg} \geq \frac{n+2}{5}$.  So we may also assume that $n_1 \leq \frac{n+2+5t}{5}$.  Under these assumptions, and using Equation \ref{r3}, we get;
\[
n_2 \geq n - \frac{n+2}{5} - \frac{2(n+2+5t)}{5} + 2(t+r) = \frac{2n}{5} - \frac{6}{5} + 2r.
\]
Clearly, at least half of the degree-two vertices are an independent set so that $\alpha_{reg} \geq \frac{n_2}{2}$.  Finally, because in this case, $r \geq 1$,
\[
\alpha_{reg} \geq \frac{n_2}{2} \geq \frac{n}{5} - \frac{3}{5} + r \geq \frac{n}{5} - \frac{3}{5} + 1 = \frac{n+2}{5},
\]
which completes the proof of (i).

To see that this bound is sharp, consider the following family.  Starting with a tree $T$ on $2p$ vertices all having degree one or three, subdivide each pendant edge twice and call this tree $T^*$.  Clearly, in both $T$ and $T^*$, $n_1=p+1$ and $n_3=p-1$.  On the other hand, in $T^*$, $n_2 = 2n_1 = 2p+2$.  Thus, $T^*$ has a total of $4p+2$ vertices, where the degree two vertices induce a matching.  Now add $p+1$ isolated vertices to make a forest $F$ of order $n=5p+3$ where $n_0=n_1=\frac{n_2}{2}=n_3+2=p+1 = \frac{n+2}{5}$. Since $n_3 < p+1$, the degree zero vertices are independent, the degree one vertices are independent, and exactly half of the degree two vertices are independent, $\alpha_{reg}(F)=\frac{n+2}{5}$ -- which shows the bound is sharp (see $F_1$ in Figure \ref{fig-forests}).

To prove (ii), notice that if either $n_0 \geq \frac{2(n+2)}{9}$ or $n_1 \geq \frac{2(n+2)}{9}$, we are done since both of the subgraphs induced by the vertices of degree zero and one respectively are $1$-independent sets and $\onereg(F) \geq \max \{\alpha_{1,0},\alpha_{1,1}\}=\max \{ n_0,n_1\}$.  So we may assume that $n_0 < \frac{2(n+2)}{9}$ and $n_1 < \frac{2(n+2)}{9}$.  As in the proof of the previous proposition, we use the observations that $n_1 \geq N_3+2$ and $n_2 = n-n_0-n_1-N_3$ to deduce that $n_2 \geq n-n_0-2n_1+2$.  Finally, we use the fact that $\alpha_{1,2}(F) \geq \frac{2}{3}n_2$ since the degree two vertices induce a collection of paths to complete the proof as follows:
\begin{align*}
\onereg(F) & \geq \alpha_{1,2}(F) \geq \frac{2}{3}n_2 \geq \frac{2}{3}(n -n_0 - 2n_1 +2) \\
& > \frac{2}{3}(n - \frac{2(n+2)}{9} - \frac{4(n+2)}{9} +2) = \frac{2(n+2)}{9}.
\end{align*}
To see that this bound is sharp, let $p$ be a positive integer.  We will construct a forest $F$ with $n=9p+7$ vertices where equality holds.  First, observe that there is a tree with exactly $\frac{4(n+2)}{9}-2$ vertices all of which having degree one or degree three.  To see this is true, note that we can start with a double star on six vertices (i.e. two $K_{1,2}$ with the central vertices joined by an edge) with $n_1=4$ and $n_3=2$, corresponding to the case that $n=16$, and then attach two leaves to each of two existing leaves each time $p$ is incremented. The trees formed this way will always have $4p+2=\frac{4(n+2)}{9}-2$ vertices with $2p+2$ leaves and $2p$ degree three vertices.  Now take a path on $\frac{3(n+2)}{9}$ vertices and join an endpoint of this path to a leaf of the previously formed tree.  Finally, add $\frac{2(n+2)}{9}$ isolated vertices.  This graph is a forest with exactly
\[
\frac{4(n+2)}{9}-2 + \frac{3(n+2)}{9} + \frac{2(n+2)}{9} = n
\]
vertices such that $n_0=\frac{2(n+2)}{9}$, $n_1=\frac{2(n+2)}{9}$, $n_2=\frac{3(n+2)}{9}$, and $N_3=n_3=\frac{2(n+2)}{9}-2$.  Since the degree two vertices induce a path whose order is a multiple of three, and the $1$-independence number of such paths is easily seen to be $\frac{2}{3}$ of its order, we find that $\alpha_{1,2}(F)=\frac{2(n+2)}{9}$.  Moreover, since the degree zero and the degree one vertices induce independent sets, and hence $1$-independent sets, we also find that $\alpha_{1,0}=\alpha_{1,1}=\frac{2(n+2)}{9}$.  Together with the fact that $\alpha_{1,3}\leq n_3 < \frac{2(n+2)}{9}$, we conclude $\onereg(F) = \frac{2(n+2)}{9}$, which shows the bound is sharp (see $F_2$ in Figure \ref{fig-forests}).

\begin{figure}[h]
\begin{center}
\psset{unit=0.8cm, linewidth=0.03cm}
   \begin{pspicture}(-0.8,-2)(18,3.7)
      %(i)
      \psframe[framearc=0.2,linewidth=0.02cm](-0.5,-1.5)(6.5,2.5)
      \put(5,0){$F_1$}
      
      \cnode*(0,2){0.13}{a1}
      \cnode[fillstyle=solid,fillcolor=lightgray](1,2){0.13}{a2}
      \cnode(2,2){0.13}{a3}
      \cnode(3,2){0.13}{a4}
      \cnode(4,2){0.13}{a5}
      \cnode[fillstyle=solid,fillcolor=lightgray](5,2){0.13}{a6}
      \cnode*(6,2){0.13}{a7}
      \cnode(3,1){0.13}{a8}
      \cnode[fillstyle=solid,fillcolor=lightgray](3,0){0.13}{a9}
      \cnode*(3,-1){0.13}{a10}
      \cnode(1,1){0.13}{a11}
      \cnode*(1,1){0.05}{a11bis}
      \cnode(1,0){0.13}{a12}
      \cnode(1,0){0.05}{a12bis}
      \cnode(1,-1){0.13}{a13}
      \cnode(1,-1){0.05}{a13bis}
      
      \ncline{a1}{a2}
      \ncline{a2}{a3}
      \ncline{a3}{a4}
      \ncline{a4}{a5}
      \ncline{a5}{a6}
      \ncline{a6}{a7}
      \ncline{a4}{a8}
      \ncline{a8}{a9}
      \ncline{a9}{a10}
     
      %(ii)
      \psframe[framearc=0.2,linewidth=0.02cm](7.5,0.7)(17.5,2.7)
      \put(16,1.2){$F_2$}
      
      \cnode*(8,2.2){0.13}{b1}
      \cnode*(8,1.2){0.13}{b2}
      \cnode(9,1.7){0.13}{b3}
      \cnode(10,1.7){0.13}{b4}
      \cnode[fillstyle=solid,fillcolor=lightgray](11,2.2){0.13}{b5}
      \cnode*(11,1.2){0.13}{b6}
      \cnode[fillstyle=solid,fillcolor=lightgray](12,2.2){0.13}{b7}
      \cnode(13,2.2){0.13}{b8}
      \cnode[fillstyle=solid,fillcolor=lightgray](14,2.2){0.13}{b9}
      \cnode[fillstyle=solid,fillcolor=lightgray](15,2.2){0.13}{b10}
      \cnode(16,2.2){0.13}{b11}
      \cnode*(17,2.2){0.13}{b12}
      \cnode(12,1.2){0.13}{b13}
      \cnode(13,1.2){0.13}{b14}
      \cnode(14,1.2){0.13}{b15}
      \cnode(15,1.2){0.13}{b16}
      \cnode*(12,1.2){0.05}{b13}
      \cnode*(13,1.2){0.05}{b14}
      \cnode*(14,1.2){0.05}{b15}
      \cnode*(15,1.2){0.05}{b16}
      
      \ncline{b1}{b3}
      \ncline{b2}{b3}
      \ncline{b3}{b4}
      \ncline{b4}{b5}
      \ncline{b4}{b6}
      \ncline{b5}{b7}
      \ncline{b7}{b8}
      \ncline{b8}{b9}
      \ncline{b9}{b10}
      \ncline{b10}{b11}
      \ncline{b11}{b12}
      
      %(iii)
       \psframe[framearc=0.2,linewidth=0.02cm](9.5,-1.7)(15.5,0.3)
       \put(14.5,-1.2){$F_3$}
       
      \cnode*(10,-0.2){0.13}{c1}
      \cnode[fillstyle=solid,fillcolor=lightgray](11,-0.2){0.13}{c2}
      \cnode[fillstyle=solid,fillcolor=lightgray](12,-0.2){0.13}{c3}
      \cnode(13,-0.2){0.13}{c4}
      \cnode[fillstyle=solid,fillcolor=lightgray](14,-0.2){0.13}{c5}
      \cnode*(15,-0.2){0.13}{c6}
      \cnode*(13,-1.2){0.13}{c7}
      \cnode(10,-1.2){0.13}{c8}
      \cnode(11,-1.2){0.13}{c9}
      \cnode(12,-1.2){0.13}{c10}
      \cnode*(10,-1.2){0.05}{c8}
      \cnode*(11,-1.2){0.05}{c9}
      \cnode*(12,-1.2){0.05}{c10}
      
      \ncline{c1}{c2}
      \ncline{c2}{c3}
      \ncline{c3}{c4}
      \ncline{c4}{c5}
      \ncline{c5}{c6}
      \ncline{c4}{c7}

\end{pspicture}
\caption{ \label{fig-forests} Forests satisfying equality in Theorem \ref{thm-forests}. Monochromatic vertices, aside from the vertices colored white, form different $\reg(F_1)$-sets, $\onereg(F_2)$-sets and  $\kreg(F_3)$-sets in each case.}
\end{center}
\end{figure}

To prove (iii), let $k \geq 2$ and notice that if either $n_0 \geq \frac{n+2}{4}$ or $n_1 \geq \frac{n+2}{4}$, we are done since both of the subgraphs induced by the vertices of degree zero and one respectively are $k$-independent sets and $\alpha_{k-reg}(F) \geq \max \{\alpha_{k,0},\alpha_{k,1}\}=\max \{ n_0,n_1\}$.  So we may assume that $n_0 < \frac{n+2}{4}$ and $n_1 < \frac{n+2}{4}$.  We again make use of $n_2 \geq n-n_0-2n_1+2$ together with $\alpha_{k,2}=n_2$ to complete the proof as follows:
\[
\alpha_{k-reg}(F) \geq \alpha_{k,2}(F) = n_2 \geq n -n_0 - 2n_1 +2 > n - \frac{n+2}{4} - \frac{2(n+2)}{4} +2 = \frac{n+2}{4}.
\]
To see that this bound is sharp, start with a path on $q\geq 1$ vertices and attach a path of length two to each vertex.  Now attach an additional leaf to one endpoint of the initial path and add $q+1$ isolated vertices.  This family has the following properties; $n_0=n_1=n_2=n_3+2=q+1$, $n=4q+2$ and $\alpha_{k-reg} = \alpha_{k,0} = \alpha_{k,1} = \alpha_{k,2} = q+1 = \frac{n+2}{4}$ -- which shows the bound is sharp (see $F_3$ in Figure \ref{fig-forests}).
\end{proof}

The theorems above show that, for trees and forests, finding $\kreg(G)$ can be accomplished by finding the independence number of the subgraphs induced by the vertices of degrees $2$, degree $1$ and degree $0$ (in the case of forests). This can be easily performed by a linear time algorithm \cite{GaJo}.

To end this section, we would like to mention that the bounds given here for trees and forests can be extended easily to $n$-vertex graphs with $o(n)$ cycles. This can be done by deleting one by one the edges from the $o(n)$ cycles until one is left with a tree or, in case the original graph was disconnected, with a forest. Then, a regular $k$-independent set of the  resulting tree/forest can be transformed into a regular $k$-independent set of the original graph by deleting only the $o(n)$ vertices whose degree were affected in the process of deleting edges. Doing so, we obtain a regular $k$-independent set whose size is $(1-o(1))$ times the size of the regular $k$-independent set of the tree/forest.

\section{$k$-trees, $k$-degenerate graphs and planar graphs}

In this section, we will refine the method used by Alberson and Boutin in \cite{AlBo} and we will give lower bounds on $\reg$ for $k$-trees, $k$-degenerate graphs and planar graphs. Due to the structural characteristics of $k$-trees, we are able to get a better insight and hence stronger results than for $k$-degenerate graphs and planar graphs. There are straightforward but meticulous analogs to the procedures we use below for $k$-trees, in the cases of $k$-degenerate graphs and planar graphs, and, for this reason, we only present the proofs for $k$-trees, while summarizing the results we obtained for the other graph families in Table \ref{table-og} in what follows.

Throughout this and the next section, we will use $V_i(G)$ to denote the set of vertices of degree $i$ in $G$ and $n_i(G) = |V_i(G)|$. When the context is clear, $V_i(G)$ will be abbreviated to $V_i$ and $n_i(G)$ to $n_i$. Moreover, for any set $S \subseteq V(G)$, we will write $\chi_k(S)$ instead of $\chi_k(G[S])$.  Recall that a \emph{$k$-tree} may be formed by starting with a  complete graph $K_{k+1}$ and then adding repeatedly vertices in such a way that each added vertex has exactly $k$ neighbors that form a clique. A \emph{$k$-degenerate graph} is a graph all of whose induced subgraphs have minimum degree at most $k$. A \emph{maximal $k$-degenerate graph} is a $k$-degenerate graph with the maximum possible number of edges, which is $kn - \frac{k(k+1)}{2}$, where $n$ is the order of the graph. Note also that a (maximal) $k$-degenerate graph may be formed by starting with a vertex (a $(k+1)$-clique) and then repeatedly adding vertices in such a way that each added vertex has at most (exactly) $k$ neighbors. In particular, a $k$-tree is a maximal $k$-degenerate graph (but not necessarily  vice versa). Note also that every $k$-degenerate graph has a proper coloring using $k+1$ colors. The next two lemmas and Corollary \ref{cor-k-trees} are used to prove Theorem \ref{k-trees}, which is a main result of our paper.

\begin{lem}\label{lem-n_k+t}
Let $G$ be a $k$-tree on $n=k+t+2$ vertices, where $t \ge 1$ is an integer. Then $n_{k+t} \le t+1$.
\end{lem}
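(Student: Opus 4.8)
The plan is to exploit the fact that a vertex of degree $k+t = n-2$ has exactly one non-neighbour, and to set up an injection that packs the degree-$(n-2)$ vertices into the small set of non-neighbours of a simplicial vertex. First I would fix a simplicial vertex $w$ of $G$; since $G$ is a $k$-tree on $n\ge k+3$ vertices, such a vertex exists, has degree exactly $k$, and its neighbourhood $N$ is a $k$-clique. Writing $\bar N = V(G)\setminus(N\cup\{w\})$, we have $|\bar N| = n-1-k = t+1$, and $w$ itself has degree $k<k+t$, so it is not among the vertices being counted. Let $D$ be the set of vertices of degree $k+t$; each such vertex has a unique non-neighbour, which I will call $g(v)$. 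Partition $D = (D\cap N)\sqcup(D\cap \bar N)$, so that the goal becomes $|D\cap N|+|D\cap\bar N|\le t+1=|\bar N|$.

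Next I would record two easy facts. If $v\in D\cap N$ then $g(v)\notin N$ (two vertices of the clique $N$ are adjacent) and $g(v)\ne w$ (as $v\sim w$), so $g$ restricts to a map $g\colon D\cap N\to \bar N$. If $u\in D\cap \bar N$ then $u\not\sim w$, so $w$ is already the unique non-neighbour of $u$; consequently $u$ is adjacent to every other vertex, and in particular $g(v)\ne u$ for each $v\in D\cap N$ (otherwise $v\not\sim u$, contradicting that $u$ misses only $w$). Thus $g$ maps $D\cap N$ into $\bar N\setminus(D\cap\bar N)$.

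The heart of the argument is that $g\colon D\cap N\to\bar N$ is injective, and this is where I expect the real work. Suppose $v_1\ne v_2$ in $D\cap N$ had $g(v_1)=g(v_2)=u$. Then each $v_i$ is adjacent to everything except $u$, so $v_1\sim v_2$ and both are adjacent to every vertex other than $u,v_1,v_2$. Now every vertex of a $k$-tree lies in a $(k+1)$-clique; taking such a clique $Q$ through $u$, the set $Q\setminus\{u\}$ is a $k$-clique contained in $N(u)$ and hence avoiding $v_1,v_2$. Since $v_1,v_2$ are adjacent to each other and to all of $Q\setminus\{u\}$, the set $(Q\setminus\{u\})\cup\{v_1,v_2\}$ is a clique on $k+2$ vertices, contradicting the fact that a $k$-tree has clique number $k+1$. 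Hence $g$ is injective.

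Finally I would assemble the count: injectivity gives $|D\cap N| = |g(D\cap N)| \le |\bar N\setminus(D\cap\bar N)| = (t+1)-|D\cap\bar N|$, so $n_{k+t}=|D| = |D\cap N|+|D\cap\bar N|\le t+1$. The main obstacle is the injectivity step; everything else is bookkeeping about non-neighbours. The crucial structural inputs there are that $\omega(G)=k+1$ and that every vertex of a $k$-tree sits inside a $(k+1)$-clique, which together forbid the near-universal ``twin'' configuration that a failure of injectivity would create. Note that no induction on $t$ is needed: the injection does all the work in one step.
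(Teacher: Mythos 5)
Your proof is correct, and it takes a genuinely different route from the paper's. The paper proves the lemma by induction on $t$: it removes a degree-$k$ simplicial vertex $x$, invokes the induction hypothesis on $G-x$ to bound the number of degree-$(k+t)$ vertices lying in $N(x)$ by $t$ (they have degree $k+t-1$ in $G-x$), and uses the clique-number bound $\omega(G)=k+1$ to show at most one such vertex lies outside $N[x]$; the base case $t=1$ requires classifying the $k$-trees on $k+3$ vertices. You keep the same pivot (a simplicial vertex $w$ of degree $k$ and the partition of the degree-$(k+t)$ vertices relative to $N(w)$), and you use the same clique-number obstruction, but you replace the induction entirely by the observation that each degree-$(k+t)$ vertex has a unique non-neighbour, giving an injection $g$ from $D\cap N$ into $\bar N\setminus(D\cap\bar N)$; the bound $|D|\le|\bar N|=t+1$ then falls out in one step. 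All the ingredients you rely on are sound: a $k$-tree on $n\ge k+2$ vertices has a simplicial vertex of degree exactly $k$, every vertex lies in a $(k+1)$-clique, and $\omega(G)=k+1$, which is exactly what forbids the failure of injectivity (two common non-neighbours of some $u$ would extend a $k$-clique in $N(u)$ to a $(k+2)$-clique). What your approach buys is the elimination of the induction and of the paper's base-case analysis; what the paper's approach buys is a template that it reuses verbatim in the proof of Lemma \ref{lem-chi}, where the deletion-and-recolour induction on $n$ and $t$ really is needed.
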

\begin{proof}
We will prove the statement by induction on $t$. If $t=1$ and $n = k+3$, it is not difficult to see that there are only two non-isomorphic $k$-trees on $k+3$ vertices and that they have at most $2$ vertices of degree $k+1$. Hence $n_{k+1} \le 2$ and the base case is settled. Suppose that for any $k$-tree on $n-1 = k + (t-1) + 2$ vertices, where $t-1 \ge 1$, there are at most $t$ vertices of degree $k+t-1$. Let $G$ be a $k$-tree on $n = k+t+2$ vertices and let $x$ be a vertex of degree $k$ in $G$. Let $G^* = G - x$. Then $n(G^*) = k+(t-1)+2$ and, by induction, $G^*$ has at most $t$ vertices of degree $k+t-1$. Note that $N(x)$ is a clique. If there are two vertices $u , v \in V(G) \setminus N[x]$ of degree $k+t$ in $G$, then $u$ and $v$ have to be adjacent to each other and to every other vertex different from $x$, implying that $N(x) \cup \{u,v\}$ is a clique of order $k+2$, which is a contradiction as $G$ is a $k$-tree with maximum clique order $k+1$. Hence, there is at most one vertex of degree $k+t$ in $V(G) \setminus N[x]$ and all other vertices of degree $k+t$ in $G$ have to be contained in $N(x)$. Since the vertices of degree $k+t$ in $G$ contained in $N(x)$ have degree $k+t-1$ in $G^*$, there are at most $t$ of them by inductive assumption. Thus, in $G$ there are at most $t+1$ vertices of degree $k+t$.
\end{proof}

\begin{lem}\label{lem-chi}
Let $G$ be a $k$-tree of order $n \ge k+t+2$, where $t \ge 0$ is an integer. Then 
$$\chi(V_{k+t}(G)) \le \frac{1}{2}(t^2+t+2).$$
\end{lem}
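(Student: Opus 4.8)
The plan is to reduce the chromatic number to a clique number and then bound the latter combinatorially. Every $k$-tree is chordal, and chordality is inherited by induced subgraphs, so $G[V_{k+t}]$ is chordal, hence perfect; consequently $\chi(V_{k+t}(G)) = \omega(V_{k+t}(G))$. It therefore suffices to show that every clique $Q \subseteq V_{k+t}(G)$ satisfies $|Q| \le \frac{1}{2}(t^2+t+2) = \binom{t+1}{2}+1$, where every vertex of $Q$ has degree exactly $k+t$.

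My preferred route uses the clique tree $\mathcal{T}$ of the $k$-tree, whose nodes are the maximal cliques (all of size $k+1$) and in which adjacent cliques share exactly $k$ vertices. An easy induction shows that a vertex $v$ lies in exactly $\deg(v)-k+1$ maximal cliques, so a vertex has degree $k+t$ \emph{precisely} when it belongs to exactly $t+1$ maximal cliques. For $v \in V(G)$ let $\mathcal{T}_v$ be the subtree of $\mathcal{T}$ formed by the maximal cliques containing $v$. Since $Q$ is a clique, the subtrees $\{\mathcal{T}_q : q \in Q\}$ all contain a common subtree $\mathcal{T}_Q$ (the cliques containing all of $Q$); each $\mathcal{T}_q$ has exactly $t+1$ nodes; and crossing a single edge of $\mathcal{T}$ deletes exactly one vertex from the current clique, hence removes at most one member of $Q$. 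The problem thus becomes the following tree-combinatorial one: bound the number of size-$(t+1)$ subtrees, arising from distinct vertices and meeting in the common subtree $\mathcal{T}_Q$, subject to this ``lose at most one member per edge'' rule.

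I expect the main obstacle to be the members of $Q$ that all lie inside a single maximal clique: these are automatically pairwise adjacent, so neither a forward-degree argument (in a construction ordering $v_1,\dots,v_n$, where the late members of $Q$ obey the clean identity $r_j+e_j=(k+t)-(|Q|-1)$ and are easily bounded in number by $t+1$) nor the edge-rule above constrains them directly, and a priori as many as $k+1$ of them could occur. The decisive ingredient must be the hypothesis $n \ge k+t+2$, equivalently that $\mathcal{T}$ has at least $t+2$ nodes: this guarantees enough maximal cliques outside $\mathcal{T}_Q$ that the ``$t+1$ cliques'' budget of each $q$ cannot be spent inside a single node, and every clique reached upon leaving $\mathcal{T}_Q$ must avoid the members already exhausted. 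I plan to make this precise by charging the members of $Q$ to edges of $\mathcal{T}$ incident with $\mathcal{T}_Q$ (each such edge releasing at most one member and the $t+1$-node bound limiting how far each subtree reaches), which should produce the quadratic $\binom{t+1}{2}$ term plus the single common node; an alternative is induction on $t$, deleting a simplicial (degree-$k$) vertex to descend to a $k$-tree on $n-1 \ge k+(t-1)+2$ vertices and trying to establish the recursion $\omega(V_{k+t}) \le \omega(V_{k+t-1}) + t$ (for which Lemma~\ref{lem-n_k+t} is the natural tool for the local count). The delicate point in either approach is getting the bookkeeping to land exactly on $\binom{t+1}{2}+1$ rather than on a weaker estimate off by a constant, since the naive reductions tend to double-count the members sharing a common maximal clique.
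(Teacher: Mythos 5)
Your opening reduction is sound and is a genuinely different (and attractive) framing from the paper's: since $k$-trees are chordal and chordality is hereditary, $G[V_{k+t}]$ is perfect and $\chi(V_{k+t})=\omega(V_{k+t})$, so it suffices to bound the size of a clique $Q$ of degree-$(k+t)$ vertices. The paper instead runs a double induction (outer on $t$, inner on $n$) and extends a proper coloring greedily after deleting a simplicial vertex $x$, in effect showing $G[V_{k+t}]$ is $\frac{1}{2}(t^2+t)$-degenerate. Your ``alternative'' route is precisely the clique-number shadow of that argument, and it does work: with $x$ simplicial of degree $k$, the set $Q\cap N(x)$ is a clique of vertices of degree $k+t-1$ in $G-x$, hence has size at most $\frac{1}{2}(t^2-t+2)$ by the outer induction; each vertex of $N(x)$ of degree $k+t$ has exactly $t$ neighbours outside $N[x]$, so if $Q$ meets $N(x)$ then $|Q\setminus N[x]|\le t$, giving $|Q|\le\frac{1}{2}(t^2-t+2)+t=\binom{t+1}{2}+1$; and if $Q$ misses $N[x]$ entirely one falls back on an inner induction on $n$, whose base case $n=k+t+2$ is exactly Lemma~\ref{lem-n_k+t}.

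The problem is that you have not actually done any of this: the proposal is a plan with two candidate routes and completes neither, and the central inequality $\omega(V_{k+t}(G))\le\binom{t+1}{2}+1$ is never established. Concretely: (a) the recursion $\omega(V_{k+t})\le\omega(V_{k+t-1})+t$ is only ``to be established,'' and as written it is ambiguous whether the right-hand side refers to $G$ or to $G-x$ (it must be $G-x$, which is why the hypothesis has to be checked as $n-1\ge k+(t-1)+2$); (b) the case $Q\cap N(x)=\emptyset$, which forces the second induction on $n$ and the use of Lemma~\ref{lem-n_k+t} in its base case, is not addressed; (c) the clique-tree charging argument is left entirely heuristic, and as you yourself observe, the naive ``lose at most one member per edge'' count does not obviously produce the quadratic bound --- indeed, without invoking $n\ge k+t+2$ it cannot, since all the subtrees $\mathcal{T}_q$ could coincide along a single path and the edge count then gives nothing better than $|Q|\le k+1$. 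Identifying the right obstacles is not the same as overcoming them; until one of the two routes is carried through to the exact constant, this is a proof outline rather than a proof.
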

\begin{proof}
The proof goes by induction on $t$. If $t=0$, then, since $n \ge k+2$, the vertices of degree $k$ form an independent set and thus we have clearly $\chi(V_k(G)) \le 1 = \frac{1}{2}(t^2+t+2)$. This settles the base case. Let $t \ge 1$. Assume that 
\begin{equation}\label{ind-assump}
\chi(V_{k+t-1}(G)) \le \frac{1}{2}((t-1)^2+(t-1)+2) = \frac{1}{2}(t^2-t+2)
\end{equation}
for every  $k$-tree $G$ on $n \ge k+(t-1)+2 = k+t+1$ vertices. Now we will show that $\chi(V_{k+t}(G)) \le \frac{1}{2}(t^2+t+2)$ for every $k$-tree $G$ on $n \ge k+t+2$ vertices. Again, we use induction but now on $n$. 

If $G$ is a $k$-tree on $n = k+t+2$ vertices, then Lemma \ref{lem-n_k+t} yields $n_{k+t}(G) \le t+1$ and we obtain $\chi(V_{k+t}(G)) \le t+1 \le \frac{1}{2}(t^2+t+2)$, which settles the base case. Assume now that $\chi(V_{k+t}(G)) \le \frac{1}{2}(t^2+t+2)$ for every $k$-tree $G$ on $n$ vertices, where $n$ is a fixed integer with $n \ge k+t+2$. Let $G$ be a $k$-tree on $n+1$ vertices. We will show that $\chi(V_{k+t}(G)) \le \frac{1}{2}(t^2+t+2)$ also holds.  Let $x \in V(G)$ be a vertex of degree $k$ and let $G^* = G - x$. Then $n(G^*) = n$, $\chi(V_{k+t-1}(G^*)) \le \frac{1}{2}(t^2-t+2)$ because of (\ref{ind-assump}) and,  by induction, $\chi(V_{k+t}(G^*)) \le \frac{1}{2}(t^2+t+2)$. Then, since $N_G(x)$ is a clique, $|V_{k+t-1}(G^*) \cap N_G(x)| \le \frac{1}{2}(t^2-t+2)$. Note that the vertices of degree $k+t$ in $G$ are either contained in $V(G)\setminus N_G[x]$ and have degree $k+t$ in $G^*$ or they are contained in $N_G(x)$ and have degree $k+t-1$ in $G^*$. Observe also that every vertex $u \in V_{k+t}(G) \cap N_G(x)$ has exactly $t$ neighbors in $V(G)\setminus N_G[x]$ and hence it has at most $t$ neighbors in $V_{k+t}(G)\setminus N_G[x]$. Thus, we can transform a proper coloring of $V_{k+t}(G^*)$ with $ \frac{1}{2}(t^2+t+2)$ colors into a proper coloring of $V_{k+t}(G)$ with $\frac{1}{2}(t^2+t+2)$ colors the following way. Every vertex contained in $V_{k+t}(G) \setminus N_G[x] = V_{k+t}(G^*) \setminus N_G[x]$ remains colored with the same color. Since each vertex of $V_{k+t}(G) \cap N_G(x)$ has at most $t$ neighbors in $V_{k+t}(G) \setminus N_G[x]$ and since $|V_{k+t}(G) \cap N_G(x)| = |V_{k+t-1}(G^*) \cap N_G(x)| \le \frac{1}{2}(t^2-t+2)$, it follows that the vertices of $V_{k+t}(G) \cap N_G(x)$ have at most $\frac{1}{2}(t^2-t) - 1 + t = \frac{1}{2} (t^2+t)<  \frac{1}{2}(t^2+t+2)$ neighbors of degree $k+t$ in $G$. Hence, coloring the vertices of $V_{k+t}(G) \cap N_G(x)$ one after the other, we can assign each vertex a color that does not  appear on its neighbors.

 Hence, it follows by induction that $\chi(V_{k+t}(G)) \le \frac{1}{2}(t^2+t+2)$ for every $k$-tree on $n \ge k+t+2$ vertices.
\end{proof}

\begin{rem}
Note that in Lemma \ref{lem-chi} we prove something stronger; namely that, given a $k$-tree $G$ of order $n \ge k +t +2$, $t \ge 0$, the graph $G[V_{k+t}(G)]$ is $\frac{1}{2}(t^2 +t)$-degenerate. Hence, clearly $\chi(V_{k+t}) \le \frac{1}{2}(t^2+t+2)$. 
\end{rem}

Now, via Lemma \ref{lem-chi}, we are able to bound from above the number of vertices of equal degree in a $k$-tree. However, this makes sense only for the values of $t$ where $n_{k+t}(G) \le (k+1) \reg(G)$ is not a better bound. We will use a parameter $q(k)$ to mark this critical point.

\begin{cor}\label{cor-k-trees}
Let $G$ be a $k$-tree on $n \ge k+q(k)+2$ vertices, where $q(k) = \lfloor \frac{-1+\sqrt{8k-7}}{2} \rfloor$. Then 
\[n_{k+t}(G) \le \frac{1}{2} (t^2+t+2) \reg(G), \mbox{ for } 0 \le t \le q(k), \mbox{ and}\]
\[ n_{k+t}(G) \le (k+1) \reg(G), \mbox{ for } t \ge q(k)+1.\]
\end{cor}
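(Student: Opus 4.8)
The plan is to derive both bounds from a single counting inequality and then feed in the two different upper bounds on the chromatic number that are already available to us. The starting point is the elementary fact that $|V(H)| \le \chi(H)\,\alpha(H)$ for every graph $H$, since each of the $\chi(H)$ colour classes of an optimal proper colouring is an independent set and hence has at most $\alpha(H)$ vertices.

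First I would apply this to the induced subgraph $H = G[V_{k+t}(G)]$. The key point is that all vertices of $H$ have one and the same degree $k+t$ \emph{in $G$}; therefore any independent set of $H$ is a set of pairwise non-adjacent vertices of $G$ all of equal degree, i.e.\ a regular independent set of $G$. Hence $\alpha(H) \le \reg(G)$, and the counting inequality specialises to
\[
n_{k+t}(G) = |V(H)| \le \chi\big(V_{k+t}(G)\big)\cdot \reg(G).
\]
The whole statement now reduces to bounding $\chi(V_{k+t}(G))$ in the two ranges of $t$.

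For $0 \le t \le q(k)$ I would apply Lemma \ref{lem-chi}. Its hypothesis $n \ge k+t+2$ is satisfied because $n \ge k+q(k)+2 \ge k+t+2$, so it gives $\chi(V_{k+t}(G)) \le \frac12(t^2+t+2)$; inserting this into the displayed inequality yields the first bound. For $t \ge q(k)+1$ I would instead use that a $k$-tree is $k$-degenerate and therefore has a proper $(k+1)$-colouring, as recorded just before the lemmas. Restricting any such colouring to $V_{k+t}(G)$ shows $\chi(V_{k+t}(G)) \le k+1$, and substitution gives the second bound. Note that both estimates on $\chi$ are in fact valid throughout (subject only to $n \ge k+t+2$ for the quadratic one); the role of $q(k)$ is solely to record, in each range, whichever of the two is the smaller.

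The only computational point is to verify that $q(k)$ is indeed the crossover value, i.e.\ the largest $t$ for which the quadratic bound is the better one. Since $\frac12(t^2+t+2) = \frac{t(t+1)}{2}+1$ is always an integer, the quadratic bound is strictly smaller than $k+1$ exactly when $\frac12(t^2+t+2) \le k$, equivalently $t(t+1) \le 2k-2$, i.e.\ $t^2+t-(2k-2)\le 0$; the largest integer solution of this is $\lfloor \frac{-1+\sqrt{8k-7}}{2}\rfloor = q(k)$, and for $t \ge q(k)+1$ one has $\frac12(t^2+t+2)\ge k+1$, so the degeneracy bound takes over. I do not expect any genuine obstacle here: the structural content is entirely contained in the inequality $n_{k+t}(G) \le \chi(V_{k+t}(G))\,\reg(G)$ and in Lemma \ref{lem-chi}, so the remaining work is only the case split and checking that the hypothesis of Lemma \ref{lem-chi} holds on the range $t \le q(k)$.
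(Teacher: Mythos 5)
Your proposal is correct and follows essentially the same route as the paper: both rest on the inequality $n_{k+t}(G) \le \chi(V_{k+t}(G))\,\reg(G)$, apply Lemma \ref{lem-chi} for $0 \le t \le q(k)$, and fall back on the $(k+1)$-colourability of $k$-trees for $t \ge q(k)+1$, with $q(k)$ defined as the largest $t$ satisfying $\frac{1}{2}(t^2+t+2) \le k$. Your write-up merely spells out in more detail the justification of the counting inequality and the derivation of the closed form for $q(k)$, both of which the paper leaves implicit.
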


\begin{proof}
Let $q(k)$ be the maximum integer for which $\frac{1}{2}(q(k)^{2}+q(k)+2) \le k$. Then $q(k) = \lfloor \frac{-1+\sqrt{8k-7}}{2} \rfloor$. Since $n_{k+t}  \le \chi(V_{k+t}(G)) \reg(G)$, Lemma \ref{lem-chi} yields 
\[n_{k+t}(G) \le \frac{1}{2}(t^2+t+2)  \reg(G)\]
for $0 \le t \le q(k)$. For $t \ge q(k)+1$, we use the fact that every $k$-tree on $n \ge k+3$ vertices is $(k+1)$-colorable and we obtain $n_{k+t}  \le \chi(V_{k+t}(G)) \reg(G) \le (k+1) \reg(G)$.
\end{proof}

Corollary \ref{cor-k-trees} enables us to compute a bound on $\reg(G)$ for $k$-trees, which will be given in the next theorem. The proof of this theorem, adapted for each particular case, contains the essence of the computations of all other bounds that are given further on for planar graphs, outerplanar graphs and $k$-degenerate graphs -- and is therefore a main result of our paper.

\begin{thm}\label{k-trees} %k-trees
Let $G$ be a $k$-tree on $n \ge k+q(k)+2$ vertices, where $q(k) = \lfloor \frac{-1+\sqrt{8k-7}}{2} \rfloor$ and $k \ge 2$. Then 
\[\reg(G) > \frac{24k}{48k^3 + 84k^2 - 72k  - (16k^2-13)\sqrt{8k-7} + 36}\; n.\]
\end{thm}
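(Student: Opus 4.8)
The plan is to convert the degree-class counts controlled by Corollary \ref{cor-k-trees} into an upper bound on $n$ in terms of $\alpha := \reg(G)$, exploiting that in a $k$-tree the total degree is essentially fixed. Writing $n_{k+t} = |V_{k+t}(G)|$, the two ingredients I would combine are: (a) the edge count of a $k$-tree, $|E(G)| = kn - \binom{k+1}{2}$, together with $\delta(G) = k$, which via $\sum_v \deg(v) = 2|E(G)|$ and $\sum_{t\ge 0} n_{k+t} = n$ yields the degree-budget identity
$$\sum_{t \ge 0} t\, n_{k+t} = k(n-k-1) < kn;$$
and (b) the per-class caps of Corollary \ref{cor-k-trees}, namely $n_{k+t} \le \tfrac12(t^2+t+2)\alpha$ for $0 \le t \le q(k)$ and $n_{k+t} \le (k+1)\alpha$ for $t \ge q(k)+1$.

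The identity in (a) is exactly what limits the number of high-degree vertices: a vertex of degree $k+t$ spends $t$ units of the fixed budget $k(n-k-1)$, so there cannot be many vertices with large $t$. Accordingly I would split $n = \sum_{t\ge 0} n_{k+t}$ and estimate the low-$t$ part by the caps in (b) and the high-$t$ part by the budget in (a). The cleanest realization is a greedy fill of the degree-classes: the classes with small $t$ are cheap with respect to the budget and are taken up to their caps, after which the budget $k(n-k-1)$ caps the total contribution of the remaining (expensive) high-degree classes. Equivalently, multiply the identity in (a) by a parameter, add it to $n = \sum_t n_{k+t}$, retain only the classes whose resulting coefficient is positive, and bound those by (b); this is the linear-programming dual of the greedy fill and produces a bound of the shape $\alpha > n/\Phi(k)$ for an explicit $\Phi$.

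The surd $\sqrt{8k-7}$ enters at the regime change $t = q(k)$, which is the step I would treat most carefully. By the definition of $q(k)$ this is where the quadratic cap $\tfrac12(t^2+t+2)$ reaches $k+1$, i.e.\ where $\tfrac12(q^2+q+2)=k$ and hence $q = \tfrac{-1+\sqrt{8k-7}}{2}$; every summation assembling $\Phi(k)$ breaks at this value, so the closed form is a rational expression in $k$ and $q(k)$, and substituting $q(k) = \tfrac{-1+\sqrt{8k-7}}{2}$ is what introduces the surd. Concretely I would evaluate $\sum_{t=0}^{q}\tfrac12(t^2+t+2) = \tfrac{(q+1)(q^2+2q+6)}{6}$ together with the weighted sum $\sum_{t=0}^{q}(k-t)\tfrac12(t^2+t+2)$ needed for the budget accounting, use the relation $q^2+q+2 = 2k$ to simplify, and then optimize the cutoff.

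The main obstacle is bookkeeping rather than concept. One must (i) cross the boundary between the quadratic and constant regimes without off-by-one slippage; (ii) discretize the fill of the high-degree classes in the bound-preserving direction, so that the estimate stays a genuine upper bound on $n$ while still collapsing to a closed form in $\sqrt{8k-7}$ (this is where the final constant is slightly weaker than the continuous optimum); and (iii) carry the resulting algebra — a quadratic optimization in the cutoff, simplified using $2k=q^2+q+2$ — through to the stated value $\tfrac{24k}{48k^3+84k^2-72k-(16k^2-13)\sqrt{8k-7}+36}$. The hypotheses $k \ge 2$ and $n \ge k+q(k)+2$ are precisely what make Corollary \ref{cor-k-trees} applicable and supply the caps, so no additional case analysis should be needed.
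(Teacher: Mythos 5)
Your proposal is correct and follows essentially the same route as the paper: your ``multiply the budget identity by a parameter, add to $n=\sum_t n_{k+t}$, truncate at the cutoff, and apply the caps of Corollary \ref{cor-k-trees}'' is exactly the paper's inequality $n \le \frac{1}{r-2k}\bigl(\sum_{i=k}^{r-1}(r-i)n_i - k(k+1)\bigr)$ followed by the per-class bounds, and the regime change at $q(k)$ with the sandwich $\frac{-3+\sqrt{8k-7}}{2} \le q(k) \le \frac{-1+\sqrt{8k-7}}{2}$ (rather than the exact identity $q^2+q+2=2k$) is handled in the paper by checking coefficient signs, just as you anticipate in your bookkeeping caveats; the paper fixes the cutoff at $r=3k$ instead of optimizing exactly.
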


\begin{proof}
Since $G$ is a $k$-tree on $n$ vertices, $e(G) = kn - \frac{k(k+1)}{2}$. Hence, we have
\[\sum_{i\ge k} i n_i = 2 \left(kn - \frac{k(k+1)}{2}\right) = 2kn - k(k+1) = \sum_{i \ge k} 2kn_i - k(k+1)\]
and thus
\[\sum_{i \ge 2k+1} (i-2k) n_i = \sum_{i=k}^{2k} (2k-i)n_i- k(k+1).\]
This implies, for any index $r \ge 2k+1$,
\begin{align*}
(r-2k) \sum_{i \ge r} n_i \;\le\; \sum_{i \ge r} (i-2k) n_i &= \sum_{i \ge 2k+1} (i-2k) n_i - \sum_{i =2k+1}^{r-1} (i-2k) n_i \\
                                                                                     &= \sum_{i=k}^{r-1} (2k-i)n_i -k(k+1),
\end{align*}
yielding $$\sum_{i \ge r} n_i \; \le\; \frac{1}{r-2k} \left(\sum_{i=k}^{r-1} (2k-i)n_i -k(k+1)\right).$$
Thus, for any $r \ge 2k+1$, we have
\begin{align*}
n\; =\; \sum_{i \ge  k} n_i &= \sum_{i = k}^{r-1} n_i + \sum_{i \ge r} n_i\\
                                      &\le \sum_{i = k}^{r-1} n_i + \frac{1}{r-2k} \left(\sum_{i=k}^{r-1} (2k-i)n_i -k(k+1) \right)\\ 
                                      &= \frac{1}{r-2k} \left( \sum_{i=k}^{r-1} (r-i) n_i - k(k+1) \right).
\end{align*}
We will now use the inequalities given in Corollary \ref{cor-k-trees} to bound the above inequality. Let $q(k) = \lfloor \frac{-1+\sqrt{8k-7}}{2} \rfloor$.
\begin{align*}
n &\le \frac{1}{r-2k} \left( \left(\sum_{i=k}^{r-1} (r-i) n_i \right)- k(k+1) \right)\\
&<  \frac{1}{r-2k} \sum_{i=k}^{r-1} (r-i) n_i\\
   &= \frac{1}{r-2k} \left( \left( \sum_{i=k}^{q(k)+k} (r-i) n_i \right)+ \left(\sum_{i=q(k)+k+1}^{r-1} (r-i) n_i \right) \right)\\
   &\le \frac{\reg(G)}{r-2k}  \left( \left(\sum_{i=k}^{k+q(k)} \frac{1}{2}(r-i)((i-k)^2+(i-k)+2) \right) + \left(\sum_{i=k+q(k)+1}^{r-1} (r-i)(k+1) \right) \right)\\
  &  = \reg(G) \cdot \frac{a_k(r)+b_k(r)}{r-2k},
      \end{align*}
where
\begin{align*}
a_k(x) & = \sum_{i=k}^{k+q(k)} \frac{1}{2}(x-i)((i-k)^2+(i-k)+2), \; \mbox{and}\\
b_k(x) & = \left(\sum_{i=k+q(k)+1}^{x-1} (x-i)(k+1) \right)\\
& = \frac{k+1}{2} \left(x^2-(1+2k+2q(k))x+(k+q(k))^2+k+q(k)\right)
\end{align*}   
are functions defined for $x \in [2k+1, \infty)$.\\
Define the function $f_k(x) = \frac{a_k(x)+b_k(x)}{x-2k}$ for $x \in [2k+1, \infty)$. By the above computation, we have
\begin{equation}\label{ineq-n-f}
n < \reg(G) f_k(r).
\end{equation}
The minimum of the function $f_k(x)$ is attained when 
$$x = \frac{1}{(k+1)}\left( 2k^2+2k+ \sqrt{k^4+ R(k)}\right),$$
where $R(k) = \mathcal{O}(k^{7/4})$. Hence, the minimum of the function has order $\mathcal{O}(3k)$. Therefore, we set $r = 3k$ and calculate $f_k(3k)$, which gives us
\begin{align*}
f_k(3k) =& \;\frac{1}{24k}\left(48k^3+24k^2+24k-3q(k)^4+(-10+8k)q(k)^3 \right.\\
              & \left. +(36k-9)q(k)^2-(48k^2+2-28k)q(k) \right).
\end{align*}
Doing so, we will give a slightly weaker bound than if we used the optimal value of the function, but it is much easier to calculate and suffices for our purpose.
As $q_1(k) = \frac{-3+\sqrt{8k-7}}{2} \le q(k)= \lfloor \frac{-1+\sqrt{8k-7}}{2} \rfloor \le \frac{-1+\sqrt{8k-7}}{2} = q_2(k)$ and since the coefficients $(-10+8k)$, $(36k-9)$ and $(48k^2+2-28k)$ are all positive for $k \ge 2$, we obtain
\begin{align*}
f_k(3k) \le& \;\frac{1}{24k}(48k^3+24k^2+24k-3q_1(k)^4+(-10+8k)q_2(k)^3+(36k-9)q_2(k)^2\\
        & \;-(48k^2+2-28k)q_2(k))\\
         =& \;\frac{48k^3 + 84k^2 - 72k  - (16k^2-13)\sqrt{8k-7} + 36}{24k}.
\end{align*}
We conclude the proof by substituting this into Inequality (\ref{ineq-n-f}), yielding
$$\reg(G) > \frac{24k}{48k^3 + 84k^2 - 72k  - (16k^2-13)\sqrt{8k-7} + 36}\; n.$$
\end{proof}

In Table \ref{table-k-trees}, we give more accurate lower bounds on the regular independence number of $k$-trees of order $n$  with $1 \le k \le 10$. This is done by a more detailed analysis of the function $f_k(x) $ used in the proof of Theorem \ref{k-trees} and by calculating the bound from the inequality $n \le \reg(G) f_k(r) - \frac{k(k+1)}{r-2k}$. For instance, for $k=2$, we have $f_2(x) = \frac{3x^2-15x+20}{2(r-4)}$ for which $x = 6$ is the integer that is closest to its minimum. This yields $n \le \reg(G) f_2(6) - 3 = \reg(G) \frac{19}{2} - 3$ and, hence, we obtain the lower bound $\reg(G) \ge \frac{2}{19}(n+3)$ for a $2$-tree $G$ on $n$ vertices.

  \begin{figure}[!h]
    \begin{center}
    \begin{tabular}{|c||c|}
        \hline
        $k$   &     Bound \\
        \hline\hline
        $1$ & $\frac{1}{4}(n+2)$\\ \hline
        $2$ & $\frac{2}{19}(n+3)$ \\ \hline
        $3$ & $\frac{2}{37}(n+6)$ \\ \hline 
        $4$ & $\frac{3}{89}(n+\frac{20}{3})$ \\ \hline
        $5$ & $\frac{4}{179}(n+\frac{15}{2})$\\
                \hline
                  \end{tabular}
       \hspace{2ex}
       \begin{tabular}{|c||c|}
        \hline
        $k$   &     Bound \\
        \hline\hline
         $6$  & $\frac{5}{319}(n+\frac{42}{5})$\\ \hline
         $7$ & $\frac{1}{85}(n+\frac{56}{5})$ \\ \hline
         $8$ & $\frac{1}{110}(n+12)$ \\ \hline
         $9$ & $\frac{1}{139}(n+\frac{90}{7})$\\ \hline
         $10$ & $\frac{1}{172}(n+\frac{55}{4})$\\
                \hline
    \end{tabular}
   \captionof{table}{Lower bounds on $\reg(G)$ for $k$-trees, $1 \le k \le 10$.} \label{table-k-trees}
   \end{center}
\end{figure}

We remark that, when $k=1$, the bound given in Table \ref{table-k-trees} is precisely the bound for trees given in \cite{AlBo} (and in Theorem \ref{thm-trees} (i)).

%\newpage
In a similar way as for the $k$-trees, we can compute lower bounds on $\reg(G)$ for planar graphs, outerplanar graphs and $k$-degenerate graphs. The following lemmas and the next corollary give us the needed background theory to achieve this goal.  Recall that a \emph{maximal planar graph} is a planar graph with the maximum possible number of edges; namely $3n-6$ where $n \ge 3$ is the order of the graph. Further, a \emph{(maximal) outerplanar graph} is a triangulation of a polygon and it has at most (exactly) $2n-3$ edges, where, again, $n \ge 3$ is the number of vertices.

\begin{lem}
Let $G$ be a connected graph of order $n$ which is not an odd cycle or a complete graph. Then
$\chi(V_i(G)) \le \min\{i, \chi(G)\}$.
\end{lem}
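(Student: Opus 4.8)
The plan is to prove the two inequalities $\chi(V_i(G)) \le \chi(G)$ and $\chi(V_i(G)) \le i$ separately. The first is immediate: the restriction of any proper coloring of $G$ to the induced subgraph $G[V_i(G)]$ is again a proper coloring, so $\chi(V_i(G)) = \chi(G[V_i(G)]) \le \chi(G)$. The entire content of the lemma therefore lies in the bound $\chi(V_i(G)) \le i$, and this is where I would invoke the hypothesis that $G$ is connected and is neither an odd cycle nor complete.

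Write $H = G[V_i(G)]$. Every vertex of $H$ has degree exactly $i$ in $G$, hence at most $i$ neighbors inside $H$, so $\Delta(H) \le i$. A greedy argument already gives $\chi(H) \le i+1$, so the real task is to rule out the possibility that some component needs the $(i+1)$-st color. For this I would apply Brooks' theorem component by component: a connected graph with maximum degree at most $i$ has chromatic number at most $i$ unless it is a complete graph on $i+1$ vertices or (when $i=2$) an odd cycle. Thus $\chi(H) \le i$ unless some component $C$ of $H$ is one of these two exceptional graphs.

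The crux is to show the exceptional cases cannot occur. Suppose a component $C$ of $H$ equals $K_{i+1}$. Then every vertex of $C$ already has $i$ neighbors inside $C$; since each such vertex has degree exactly $i$ in $G$, it has no neighbor outside $C$, so $C$ is a connected component of $G$ itself. As $G$ is connected this forces $G = C = K_{i+1}$, contradicting the hypothesis that $G$ is not complete. Similarly, if $i=2$ and $C$ is an odd cycle, each of its vertices has degree $2$ in $C$ and degree $2$ in $G$, so again $C$ is a whole component of $G$ and connectivity gives $G = C$, an odd cycle---again a contradiction. Note that when a complete component has fewer than $i+1$ vertices, or when an odd-cycle component occurs with $i \ge 3$, Brooks already yields $\chi(C) \le i$ with no exception to resolve, so these cases require no separate argument.

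Having eliminated both exceptional cases, every component $C$ of $H$ satisfies $\chi(C) \le i$, whence $\chi(H) \le i$, and combining this with $\chi(V_i(G)) \le \chi(G)$ gives $\chi(V_i(G)) \le \min\{i, \chi(G)\}$. The main obstacle I anticipate is purely the bookkeeping around Brooks' exceptional graphs; the key observation that makes it work is that a vertex of $V_i(G)$ has the same degree in $H$ as in $G$ precisely when all of its neighbors lie in $V_i(G)$, which is exactly the situation forced inside an exceptional component, and is what lets me propagate ``exceptional component'' up to ``$G$ is itself exceptional.''
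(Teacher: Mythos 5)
Your proof is correct and follows essentially the same route as the paper: bound $\chi(V_i(G))$ by $\chi(G)$ via restriction of a coloring, and by $i$ via Brooks' theorem applied to $G[V_i(G)]$ with $\Delta \le i$, ruling out the exceptional components. In fact you spell out the step the paper leaves implicit --- that a $K_{i+1}$ or odd-cycle component of $G[V_i(G)]$ would consist of vertices with all $i$ of their $G$-neighbors inside it, hence would be all of the connected graph $G$, contradicting the hypotheses.
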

 
\begin{proof}
If $\chi(G) \le i$, then evidently $\chi(V_i) \le \chi(G) \le i$ and we are done. Hence we may assume that $\chi(G) \ge i$. Since $\Delta(G[V_i]) \le i$, Brooks' Theorem \cite{Br} implies that $\chi(V_i) = i+1$ if $G[V_i]$ contains either a component which is a complete graph on $i+1$ vertices or $i=2$ and $G[V_2]$ contains a component which is an odd cycle. Both cases are impossible as $G$ is connected and is neither a complete graph nor an odd cycle. Hence $\chi(V_i) \le i$.
\end{proof}

\begin{cor}\label{cor_p-op-d}
Let $G$ be a connected graph of order $n$ and which is not an odd cycle. Then the following statements hold: \\[-4ex]
\begin{enumerate}
\item[(1)] If $G$ is planar and $n \ge 5$, then $\chi(V_i(G)) \le i$ for $i = 1, 2, 3$ and $\chi(V_i) \le 4$ for $i \ge 4$.
\item[(2)] If $G$ is outerplanar and $n \ge 4$, then $\chi(V_1(G)) =1$, $\chi(V_2(G)) \le 2$ and $\chi(V_i) \le 3$ for $i \ge 3$.
\item[(3)]  If $G$ is $k$-degenerate and $n \ge k+2$, then $\chi(V_i(G)) \le i$ for $i \le k$ and $\chi(V_i) \le k+1$ for $i \ge k+1$.
\end{enumerate}
\end{cor}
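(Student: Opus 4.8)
The plan is to obtain all three statements as immediate consequences of the preceding lemma, which bounds $\chi(V_i(G)) \le \min\{i,\chi(G)\}$ for every connected graph $G$ that is neither an odd cycle nor a complete graph. The hypothesis of the corollary already excludes odd cycles, so the only gap between the lemma and the corollary is the complete-graph exception. The entire content of the proof is therefore to verify that, under the stated order constraints, $G$ cannot be complete, and then to substitute the correct value of $\chi(G)$ for each class and read off the minimum.

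First I would record the chromatic bounds for the three families: a planar graph satisfies $\chi(G) \le 4$ (Four Color Theorem), an outerplanar graph satisfies $\chi(G) \le 3$, and a $k$-degenerate graph satisfies $\chi(G) \le k+1$ by greedily coloring along a degeneracy ordering. Next I would dispose of the complete-graph case using exactly the order thresholds appearing in the statement. The largest complete planar graph is $K_4$, so $n \ge 5$ forces $G \ne K_n$; the largest complete outerplanar graph is $K_3$, so $n \ge 4$ does the same; and since the degeneracy of $K_n$ equals $n-1$, the graph $K_n$ is $k$-degenerate only for $n \le k+1$, so $n \ge k+2$ again rules out completeness. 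In each item $G$ is then connected, not an odd cycle, and not complete, so the lemma applies.

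It then remains to read off the minimum. For (1), $\chi(V_i) \le \min\{i,4\}$ gives $\chi(V_i) \le i$ for $i \in \{1,2,3\}$ and $\chi(V_i) \le 4$ for $i \ge 4$. For (2), $\chi(V_i) \le \min\{i,3\}$ gives $\chi(V_2) \le 2$ and $\chi(V_i) \le 3$ for $i \ge 3$, while the degree-one vertices form an independent set, so $\chi(V_1)=1$ follows directly, without invoking the lemma. For (3), $\chi(V_i) \le \min\{i,k+1\}$ yields $\chi(V_i) \le i$ for $i \le k$ and $\chi(V_i) \le k+1$ for $i \ge k+1$.

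I expect no genuine obstacle: once the lemma is available, everything reduces to the elementary observation that the threshold on $n$ in each item is precisely the smallest order at which the corresponding complete graph leaves the class. The only point that warrants a moment's care is the $k$-degenerate case, where one must justify that $K_{k+2}$ is not $k$-degenerate (equivalently, that the degeneracy of a complete graph is one less than its order) rather than simply appealing to non-planarity as in the first two parts.
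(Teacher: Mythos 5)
Your proposal is correct and follows exactly the route the paper intends: the corollary is stated without proof as an immediate consequence of the preceding lemma, and your argument — ruling out completeness via the order thresholds ($K_4$ for planar, $K_3$ for outerplanar, $K_{k+1}$ for $k$-degenerate) and then substituting $\chi(G)\le 4$, $3$, $k+1$ respectively into $\min\{i,\chi(G)\}$ — is precisely the intended derivation.
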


When $G$ is a maximal planar, maximal outerplanar or a maximal $k$-degenerate graph, we also know the following facts:

\begin{lem}\label{lem_max-p-op-d}
The following statements are valid:\\[-4ex]
\begin{enumerate}
\item[(1)] If $G$ is a maximal planar graph on $n \ge 5$ vertices, then $\chi(V_3(G)) = 1$, $\chi(V_4(G)) \le 3$ and $\chi(V_i(G)) \le 4$ for $i \ge 5$ (see \cite{AlBo}).
\item[(2)] If $G$ is a maximal outerplanar graph on $n \ge 3$ vertices, then $\chi(V_3(G)) \le 2$ and $\chi(V_i(G)) \le 3$ for $i \ge 4$.
\item[(3)] If $G$ is a maximal $k$-degenerate graph on $n \ge k+2$ vertices, then $\chi(V_k(G))=1$ and $\chi(V_i(G)) \le k+1$ for $i \ge k+1$.
\end{enumerate}
\end{lem}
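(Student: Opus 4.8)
The plan is to handle all three items with a single template: every bound for the \emph{large} degrees follows at once from a global colouring result, whereas the exceptional values (the equalities $\chi(V_3)=1$ and $\chi(V_k)=1$, and the sharpened bounds $\chi(V_4)\le 3$, $\chi(V_3)\le 2$) are precisely where maximality has to be used. For the easy direction I would invoke the elementary fact that $\chi(G[S])\le\chi(G)$ for every $S\subseteq V(G)$. Since a maximal planar graph is $4$-colourable by the Four Colour Theorem, a maximal outerplanar graph is $3$-colourable, and a maximal $k$-degenerate graph is $(k+1)$-colourable (being $k$-degenerate), we get $\chi(V_i)\le 4$, $\chi(V_i)\le 3$, and $\chi(V_i)\le k+1$ for \emph{all} $i$ in the three cases. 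This already disposes of the ranges $i\ge 5$ in (1), $i\ge 4$ in (2), and $i\ge k+1$ in (3), and in particular it is stronger than what the general bounds of Corollary \ref{cor_p-op-d} give for those degrees.

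Item (1) is due to Albertson and Boutin, so I would simply cite \cite{AlBo}; the only places where the triangulation structure enters are $\chi(V_3)=1$ and $\chi(V_4)\le 3$. For the former, one checks that two adjacent degree-$3$ vertices $u,v$ are impossible when $n\ge 5$: the neighbourhood of a degree-$3$ vertex in a triangulation is a triangle, so $u$ and $v$ must share their two remaining neighbours $a,b$, and then $\{u,v,a,b\}$ induces a $K_4$ all of whose faces are incident with $u$ or $v$; a fifth vertex would force a further neighbour of $u$ or $v$, a contradiction. The bound $\chi(V_4)\le 3$ is the genuinely planar–geometric point, and I would take it from \cite{AlBo}.

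For item (3) the substance is $\chi(V_k)=1$, i.e.\ that $V_k$ is independent. I would use the defining construction: order the vertices as $v_1,\dots,v_n$ with $v_1,\dots,v_{k+1}$ forming $K_{k+1}$ and each later $v_j$ joined to exactly $k$ earlier vertices. Then $\deg(v_j)=k$ holds if and only if no later vertex is adjacent to $v_j$, i.e.\ its forward-degree is $0$ (the minimum degree being exactly $k$, $V_k$ is the set of minimum-degree vertices and is nonempty). Consequently, if $v_a,v_b\in V_k$ with $a<b$ were adjacent, then $v_b$ would be a forward neighbour of $v_a$, contradicting $\deg(v_a)=k$; hence $V_k$ is independent and $\chi(V_k)=1$. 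The remaining bound $\chi(V_i)\le k+1$ was already obtained above.

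For item (2) the crux is $\chi(V_3)\le 2$, which I would extract from the polygon-triangulation structure. Every vertex of a maximal outerplanar graph lies on the Hamiltonian boundary cycle $C$, a degree-$3$ vertex being one incident with exactly one chord. I would first show that for $n\ge 5$ two degree-$3$ vertices can be adjacent only along $C$: if $u,v\in V_3$ were joined by a chord, the two triangles incident with that chord would have their apices among the only remaining neighbours of $u$ and of $v$, forcing $u$ and $v$ to share both boundary neighbours; since neither has any further edge, the boundary cycle would collapse to length $4$. Thus every edge of $G[V_3]$ lies on $C$, so $G[V_3]$ is an induced subgraph of a cycle; and since a maximal outerplanar graph has at least two degree-$2$ ears we have $V_3\neq V(G)$, whence $G[V_3]$ is a disjoint union of paths, hence bipartite, giving $\chi(V_3)\le 2$ (the cases $n=3,4$ being checked directly). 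The main obstacle throughout the lemma is exactly this configuration analysis—ruling out chord-adjacencies of the low-degree vertices (and, in the cited planar case, establishing $\chi(V_4)\le 3$)—since the global colouring bounds alone only yield $\chi(V_3)\le 3$ and $\chi(V_4)\le 4$.
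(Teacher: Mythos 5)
Your overall skeleton matches the paper's: the bounds for large $i$ come from $\chi(V_i)\le\chi(G)$ together with the Four Colour Theorem, the $3$-colourability of (maximal) outerplanar graphs and the $(k+1)$-colourability of $k$-degenerate graphs, and the real work is concentrated on the low-degree classes. For item (2), however, you take a genuinely different route. The paper obtains $\chi(V_3)\le 2$ by observing that a maximal outerplanar graph is a $2$-tree and invoking Lemma \ref{lem-chi} with $t=1$, which gives $\chi(V_{k+t})\le\frac{1}{2}(t^2+t+2)=2$; you instead argue directly from the polygon triangulation that a chord cannot join two degree-$3$ vertices when $n\ge 5$, so $G[V_3]$ sits inside the Hamiltonian boundary cycle and, since $V_3\neq V(G)$, is a disjoint union of paths. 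Both are correct; the paper's derivation is shorter because the machinery of Lemma \ref{lem-chi} is already in place, while yours is self-contained and says more about the structure of $G[V_3]$. For item (1) the paper actually reproduces the Albertson--Boutin argument for $\chi(V_4)\le 3$ (each component of $G[V_4]$ is $K_3$-free or is a $K_3$, then Gr\"otzsch's theorem), whereas you defer that point entirely to \cite{AlBo}; since the statement itself carries the citation, that is acceptable, and your direct verification that two adjacent degree-$3$ vertices are impossible in a triangulation on $n\ge 5$ vertices is sound.

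There is one genuine flaw, in item (3). Your claim that, in the construction ordering $v_1,\dots,v_n$, ``$\deg(v_j)=k$ if and only if the forward-degree of $v_j$ is $0$'' is false for the vertices of the initial clique: a clique vertex $v_a$ with $a\le k$ has the later clique vertices $v_{a+1},\dots,v_{k+1}$ as forward neighbours even when $\deg(v_a)=k$. Consequently your contradiction ``$v_b$ is a forward neighbour of $v_a$, contradicting $\deg(v_a)=k$'' does not dispose of the case in which $v_a$ and $v_b$ both lie in the starting $K_{k+1}$ --- and such a pair is automatically adjacent, so this is precisely a case that must be excluded. It can be excluded easily: since $n\ge k+2$, the vertex $v_{k+2}$ is adjacent to $k$ of the $k+1$ clique vertices, so all but at most one of them have degree at least $k+1$, and hence no two degree-$k$ vertices can both lie in the initial clique. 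But as written your argument does not cover this case. (The paper sidesteps the issue entirely by simply asserting that the degree-$k$ vertices of a maximal $k$-degenerate graph are independent.)
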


\begin{proof}
(1) The proof of this item is given in \cite{AlBo}, but for completeness we present it here again. In a maximal planar graph on $n \ge 5$ vertices, the vertices of degree $3$ are independent. Also it is easy to check that each component of $G[V_4(G)]$ is either $K_3$-free or is a $K_3$. Since, by Gr\"otsch's Theorem, $K_3$-free planar graphs are $3$-colorable, $G[V_4(G)]$ is $3$-colorable. Moreover, $\chi(V_i(G)) \le \chi(G) \le 4$ by the Four-Color-Theorem \cite{ApHa, ApHaKo}.\\
(2) The cases $n=3$ and $n=4$ are trivial. For $n \ge 5$,  $\chi(V_3(G)) \le 2$ follows from Lemma \ref{lem-chi} and the fact that maximal outerplanar are $2$-trees (see also \cite{CHH} for an explicit proof). Moreover, since maximal outerplanar graphs are $3$-colorable, we have $\chi(V_i(G)) \le 3$ for $i \ge 4$.\\ 
(3) Since in a  maximal $k$-degenerate graph $G$ the vertices of degree $k$ are independent and $\chi(G) \le k+1$, we have  $\chi(V_k(G))=1$ and $\chi(V_i(G)) \le \chi(G) \le k+1$ for $i \ge k+1$.
\end{proof}

Using $n_i \le \reg(G) \chi(V_i)$ together with Corollary \ref{cor_p-op-d} and Lemma \ref{lem_max-p-op-d}, we can bound the number of vertices of degree $i$  for connected planar graphs, outerplanar graphs and $k$-degenerate graphs. Proceeding as we did with the $k$-trees in Theorem \ref{k-trees}, we can find lower bounds on the regular independence number for each of these graph types. Our results are listed in Table \ref{table-og}.

\begin{figure}[!h]
\begin{center}
    \begin{tabular}{|c|c|c|c|}
        \hline
                &   Bench Mark   &  Bound obtained using  &    Bound provided\\[-1ex]
                & from Prop. \ref{prop1} & Corollary \ref{cor_p-op-d} and Lemma \ref{lem_max-p-op-d} & Edge Maximality  \\
                \hline
                \hline
         \multicolumn{4}{|c|}{{\bf Connected planar graphs}}\\ 
         \hline
        $\delta = 1$ & $\frac{1}{44}n$ & $\frac{2}{65}(n+3)$ & --\\ \hline
        $\delta = 2$ & $\frac{1}{36}n$ & $\frac{4}{121}(n+3)$ & --\\ \hline
        $\delta = 3$ & $\frac{1}{28}n$ & $\frac{1}{26}(n+4)$ & $\frac{3}{61}(n+4)$\\ \hline
        $\delta = 4$ & $\frac{1}{20}n$ & $\frac{1}{20}(n+6)$ & $\frac{1}{18}(n+6)$\\ \hline
        $\delta = 5$ & $\frac{1}{12}n$ & $\frac{1}{12}(n+12)$ & $\frac{1}{12}(n+12)$\\  \hline
               \hline
        \multicolumn{4}{|c|}{{\bf Connected outerplanar graphs}}\\
        \hline
        $\delta = 2$ & $\frac{1}{15}n$ & $\frac{1}{13}(n+3)$ & $\frac{2}{19}(n+3)$\\ \hline
               \hline
        \multicolumn{4}{|c|} {{\bf Connected $k$-degenerate graphs}}\\
        \hline
        $\delta < k$ & $\frac{1}{8k^2-(2 \delta -1)k+1}n$ & $\frac{12n+6(k+1)}{37k^2+27k+12 \delta - 12 \delta^2-10+\frac{2\delta^3-3\delta^2+\delta}{k}}$ & --\\ \hline
        $\delta= k$ & $\frac{1}{6k^2+k+1}n$ & $\frac{n+k+1}{2k^2+3k-1}$ & $\frac{n+k+1}{2k^2+k+1}$ \\ \hline
    \end{tabular}
    \captionof{table}{Lower bounds on $\reg(G)$} \label{table-og}
        \end{center}
    \end{figure}

We remark that, while the bound on planar graphs with $\delta =1$ and the bound on maximal planar graphs with $\delta = 3$ are only very tiny refinements of Albertson and Boutin's results (1) and (2) mentioned in the introduction of this paper, the other bounds on planar graphs improve upon them considerably. Further, although for general planar graphs with $\delta = 4$ and $\delta = 5$ the improvement is modest, all bounds obtained via the procedure of Theorem \ref{k-trees} are better than the benchmark bound from Proposition \ref{prop1}. Note also that the bounds on $k$-degenerate graphs with $\delta = k$ and on maximal $k$-degenerate graphs generalize Alberson and Boutin's bound for trees (see (3) in the introduction and Proposition \ref{thm-trees} (i)). Furthermore, we remark that the bound on maximal outerplanar graphs is the complementary result to the one about the fair domination number obtained in \cite{CHH} and the same bound from Table \ref{table-k-trees} when $k=2$ (derived from the fact that maximal outerplanar graphs are a special kind of $2$-trees).

Observe that it is important to have connected graphs for the bounds given in Table~\ref{table-og}, since in general it is not true that, for two disjoint graphs $G$ and $H$,  $\reg(G \cup H)  = \reg(G) + \reg(H)$, as $\reg(G)$ and $\reg(H)$ could be attained by sets of vertices each with a different degree. This phenomena already occurred in the case of trees versus forests in the previous section.

\section{Bounds on $\tworeg(G)$ for planar graphs}

In this section, we present some lower bounds on $\tworeg(G)$ for planar and outerplanar graphs $G$. Recall that the $k$-chromatic number $\chi_k(G)$ is the minimum number of colors needed to color the vertices of the graph $G$ such that the graphs induced by the vertices of each color class have maximum degree at most $k$. In \cite{Lov}, Lov\'asz shows that
\[
\chi_k(G) \le  \left \lceil \frac{\Delta(G) +1}{k+1} \right\rceil.
\]

Dealing with the more general concept of defective colorings, also called improper colorings (see \cite{CoCoWo, CoGoJe, Fri, Rack}), Cowen, Cowen and Woodall \cite{CoCoWo} show that $\chi_2(G) \le 2$ for any outerplanar graph $G$. Moreover, they prove that there are outerplanar graphs $G$ with $\chi_1(G) = 3$, they show that $\chi_2(G) \le 3$  for all planar graphs $G$ and, finally, that there are planar graphs $G$ with $\chi_1(G) = 4$.

\begin{lem}\label{lem-p-o-2}
The following statements hold:\\[-4ex]
\begin{enumerate}
\item[(1)] If $G$ is a planar graph, then $\chi_2(V_1) = \chi_2(V_2) = 1$, $\chi_2(V_i) \le 2$, for $3 \le i \le 5$, and $\chi_2(V_i) \le 3$, for $i \ge 6$. If $G$ is maximal planar, then $\chi_2(V_3) =1$ also holds.
\item[(2)] If $G$ is an outerplanar graph, then $\chi_2(V_1) = \chi_2(V_2) = 1$ and $\chi_2(V_i) \le 2$, for $i \ge 3$.
\end{enumerate}
\end{lem}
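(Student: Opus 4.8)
The plan is to split each claim into two regimes: a \emph{small-degree} regime governed purely by the maximum degree of the induced subgraph, and a \emph{large-degree} regime governed by the topological (planar/outerplanar) defective-coloring theorems cited just before the lemma. Throughout I use that $\chi_2(V_i) = \chi_2(G[V_i])$, that every vertex of $V_i$ has degree at most $i$ inside $G[V_i]$ so that $\Delta(G[V_i]) \le i$, and that $G[V_i]$ inherits planarity (resp. outerplanarity) from $G$ as an induced subgraph.

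First I would dispose of the degrees $i \le 2$ uniformly for both families: since $\Delta(G[V_i]) \le i \le 2$, a single color class already induces a subgraph of maximum degree at most $2$, whence $\chi_2(V_1) = \chi_2(V_2) = 1$ (the lower bound being trivial whenever $V_i \neq \emptyset$). For the planar case with $3 \le i \le 5$ I would invoke the Lov\'asz bound $\chi_k(H) \le \lceil (\Delta(H)+1)/(k+1) \rceil$ from \cite{Lov} with $k=2$ and $H = G[V_i]$, which gives $\chi_2(V_i) \le \lceil (i+1)/3 \rceil = 2$ for each of $i = 3, 4, 5$. Note that this step uses only the degree bound and not planarity at all.

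For $i \ge 6$ the Lov\'asz estimate $\lceil (i+1)/3 \rceil$ exceeds $2$, so here I switch tools: since $G[V_i]$ is planar, the result of Cowen, Cowen and Woodall \cite{CoCoWo} that every planar graph satisfies $\chi_2 \le 3$ yields $\chi_2(V_i) \le 3$ directly. For the maximal planar refinement, $\chi_2(V_3) = 1$ follows because the degree-$3$ vertices of a maximal planar graph on $n \ge 5$ vertices are pairwise nonadjacent -- this is precisely the statement $\chi(V_3) = 1$ recorded in Lemma~\ref{lem_max-p-op-d}(1) -- so $G[V_3]$ is edgeless and a single color suffices.

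The outerplanar case is even more economical. The degrees $i \le 2$ are already covered, and for every $i \ge 3$ the induced subgraph $G[V_i]$ is outerplanar, so the Cowen, Cowen and Woodall bound $\chi_2 \le 2$ for outerplanar graphs \cite{CoCoWo} gives $\chi_2(V_i) \le 2$ at once, with no need for the Lov\'asz estimate. I do not expect a genuine obstacle in this lemma; the only real decision is where to cross over from the degree-based Lov\'asz bound to the topological bound, and one checks that the crossover for planar graphs falls exactly at $i = 6$, which is precisely where $\lceil (i+1)/3 \rceil$ stops being at most $2$.
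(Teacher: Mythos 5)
Your proposal is correct and follows essentially the same route as the paper: the Lov\'asz bound $\chi_2(H) \le \lceil (\Delta(H)+1)/3 \rceil$ for degrees up to $5$, the Cowen--Cowen--Woodall bounds $\chi_2 \le 3$ (planar) and $\chi_2 \le 2$ (outerplanar) for the large-degree regime, and the independence of $V_3$ in maximal planar graphs from Lemma~\ref{lem_max-p-op-d} for the refinement. The only cosmetic difference is that you handle $i \le 2$ by the direct degree observation rather than by citing Lov\'asz's bound, which is the same content.
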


\begin{proof}
From Lov\'asz's bound above, we derive $\chi_2(V_1) = \chi_2(V_2) = 1$ and $\chi_2(V_i) \le 2$, for $3 \le i \le 5$. By the above cited results, we have $\chi_2(V_i) \le \chi_2(G) \le 3$ for all $i \ge 6$ when $G$ is planar, and $\chi_2(V_i) \le \chi_2(G) \le 2$ for $i \ge 3$ when $G$ is outerplanar. If $G$ is maximal planar, then from Lemma \ref{lem_max-p-op-d} we obtain $\chi_2(V_3) \le \chi(V_3) = 1$ and thus $\chi_2(V_3) =1$.
\end{proof}

The previous lemma allows us to compute some lower bounds on $\tworeg(G)$ for planar and outerplanar graphs $G$, while for the case $k=1$, with the current ideas and techniques alone, we cannot do better than the bounds which were already obtained for $\reg(G)$. Table \ref{table-p-op-2} collects the bounds on $\tworeg(G)$ we have computed for planar graphs and outerplanar graphs.
 
  \begin{figure}[!h]
    \begin{center}
  \begin{tabular}{|c|c|c|c|}
        \hline
                &   Bench Mark   & Bound obtained using    &    Bound provided\\[-1ex]
                & from Prop. \ref{prop1} & Lemma \ref{lem-p-o-2} & Edge Maximality  \\
                \hline
                \hline
         \multicolumn{4}{|c|} {{\bf Planar Graphs}}\\ 
         \hline
        $\delta = 1$ & $\frac{1}{33}n$ & $\frac{4}{83}(n+3)$ & -- \\ \hline
        $\delta = 2$ & $\frac{1}{27}n$ & $\frac{3}{55}(n+4)$ & -- \\ \hline
        $\delta = 3$ & $\frac{1}{21}n$ & $\frac{1}{16}(n+4)$ &  $\frac{1}{14}(n+6)$ \\ \hline
        $\delta = 4$ & $\frac{1}{15}n$ & $\frac{2}{23}(n+6)$ & $\frac{2}{23}(n+6)$\\ \hline
        $\delta = 5$ & $\frac{1}{9}n$ & $\frac{1}{7}(n+12)$ & $\frac{1}{7}(n+12)$ \\  \hline
               \hline
        \multicolumn{4}{|c|} {{\bf Outerplanar Graphs }}\\
        \hline
        $\delta = 2$ & $\frac{1}{10}n$ & $\frac{1}{8}(n+3)$ & $\frac{1}{8}(n+3)$ \\ \hline
    \end{tabular}
    \captionof{table}{Lower bounds on $\tworeg(G)$}\label{table-p-op-2}
        \end{center}
    \end{figure}

\section{Complexity}

The well known maximum independent set decision problem (MIS) can be stated the following way.

\noindent
{\bf MIS-problem}\\
{\sc Instance:} a graph $G$ and an integer $p \ge 1$.\\
{\sc Question:} Is there an independent set of $G$ of size $\ge p$?

It is well-known that the MIS-problem is NP-complete even when it is restricted to cubic graphs \cite{Moh} or, moreover, to planar cubic graphs \cite{AlKa}. The corresponding maximum $k$-independent set decision problem and the maximum regular $k$-independent set decision problem are given below.

\noindent
{\bf kMIS-problem}\\
{\sc Instance:} a graph $G$ and an integer $p \ge 1$.\\
{\sc Question:} Is there a $k$-independent set of $G$ of size $\ge p$?

\noindent
{\bf reg-kMIS-problem}\\
{\sc Instance:} a graph $G$ and an integer $p \ge 1$.\\
{\sc Question:} Is there a regular $k$-independent set of $G$ of size $\ge p$?\\

We will show that the kMIS-problem as well as the reg-kMIS problem stated for certain regular graphs are both NP-complete, too. We will show this giving a reduction from the MIS-problem to the maximum $k$-independent set decision problem (kMIS). We fomulate both decision problems the following way.

\begin{prop} \mbox{}\\[-4ex]
\begin{enumerate}
\item[(1)] The kMIS-problem is NP-complete.
\item[(2)] The reg-kMIS problem is NP-complete for $d$-regular graphs, where $d = ((k+1)r+k)$, $r \ge 3$.
\end{enumerate}
\end{prop}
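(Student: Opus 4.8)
The plan is to give a single polynomial ``clique blow-up'' reduction from the MIS-problem that settles both parts at once, and to relegate the two parts to a choice of starting graph. Membership in NP is routine in both cases: given a candidate set $S$, one checks in polynomial time that $|S|\ge p$, that $G[S]$ has maximum degree at most $k$, and, for reg-kMIS, that all vertices of $S$ have the same degree. So the content is the NP-hardness, which I would prove as follows. Given $G=(V,E)$, construct $G'$ by replacing each vertex $v$ with a clique $Q_v\cong K_{k+1}$ and, for every edge $uv\in E$, joining every vertex of $Q_u$ to every vertex of $Q_v$ by a complete bipartite graph. Then $G'$ has $(k+1)|V|$ vertices, is built in polynomial time, and a vertex of $Q_v$ has degree $k+(k+1)\deg_G(v)$; in particular, if $G$ is $r$-regular then $G'$ is $d$-regular with $d=k+(k+1)r=(k+1)r+k$, exactly the degree demanded in part~(2).

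The heart of the reduction is the identity $\alpha_k(G')=(k+1)\alpha(G)$, where $\alpha_k$ is the $k$-independence number. The inequality ``$\ge$'' is easy: if $I$ is an independent set of $G$, then $\bigcup_{v\in I}Q_v$ has no cross edges and induces a disjoint union of copies of $K_{k+1}$, hence is a $k$-independent set of size $(k+1)|I|$, and these are automatically of equal degree once $G'$ is regular.

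For the converse (the crux of the argument), let $S$ be any $k$-independent set of $G'$, set $a_v=|S\cap Q_v|$, and let $W=\{v: a_v\ge 1\}$. A chosen vertex of $Q_v$ has $(a_v-1)+\sum_{u\sim v}a_u$ chosen neighbours, so $k$-independence forces $\sum_{u\in N_G[v]}a_u\le k+1$ for every $v\in W$ (vertices outside $W$ contribute $0$). Now take a \emph{maximal} independent set $I$ of $G[W]$; being maximal it dominates $G[W]$, so every $v\in W$ lies in the closed neighbourhood $N_{G[W]}[w]$ of some $w\in I$. Since all $a_v\ge 0$,
\[
|S|=\sum_{v\in W}a_v\ \le\ \sum_{w\in I}\ \sum_{u\in N_{G[W]}[w]}a_u\ \le\ (k+1)\,|I|\ \le\ (k+1)\,\alpha(G),
\]
where the middle step uses that $\sum_{u\in N_{G[W]}[w]}a_u=\sum_{u\in N_G[w]}a_u\le k+1$ for each $w\in I\subseteq W$. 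This yields ``$\le$'' and proves the identity.

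Everything now follows. For part~(1), reduce from MIS on general graphs: $G'$ has a $k$-independent set of size $\ge(k+1)p$ iff $\alpha(G)\ge p$, so kMIS is NP-complete. For part~(2), start from an $r$-regular graph $G$, using that MIS remains NP-complete on $r$-regular graphs for every fixed $r\ge 3$ (the case $r=3$ being the cited cubic result). Then $G'$ is $d$-regular, so \emph{every} $k$-independent set of $G'$ is trivially a regular $k$-independent set; hence $\kreg(G')=\alpha_k(G')=(k+1)\alpha(G)$, and the same equivalence shows reg-kMIS is NP-complete on $d$-regular graphs with $d=(k+1)r+k$. The only genuinely nontrivial point—and the step I would expect to require the most care—is the converse inequality $\alpha_k(G')\le(k+1)\alpha(G)$, i.e.\ ruling out that partial selections of the blown-up cliques can beat choosing whole cliques; translating $k$-independence into the closed-neighbourhood bound $\sum_{u\in N_G[v]}a_u\le k+1$ and covering $W$ by a maximal (hence dominating) independent set is what resolves it cleanly.
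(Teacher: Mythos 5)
Your proposal is correct and follows the same reduction as the paper: the identical clique blow-up $G\mapsto G_{k+1}$, the same key identity $\alpha_k(G')=(k+1)\alpha(G)$, and the same observation that regularity of $G'$ makes every $k$-independent set automatically regular, so that part (2) follows from NP-completeness of MIS on $r$-regular graphs for $r\ge 3$. The only place you diverge is in proving the crux inequality $\alpha_k(G')\le(k+1)\alpha(G)$. The paper does this in two short steps: first $\alpha(G')=\alpha(G)$ (a maximum independent set meets each clique $Q_v$ at most once), and then the general fact that any $k$-independent set $S$ induces a subgraph of maximum degree at most $k$, hence satisfies $\chi(G'[S])\le k+1$ and splits into $k+1$ independent sets of $G'$, giving $\alpha_k(G')\le(k+1)\alpha(G')=(k+1)\alpha(G)$. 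You instead argue directly with the occupation numbers $a_v=|S\cap Q_v|$, derive the closed-neighbourhood constraint $\sum_{u\in N_G[v]}a_u\le k+1$, and cover the support $W$ by a maximal (hence dominating) independent set of $G[W]$; this is a valid and self-contained counting argument, and it even localizes the bound to $\alpha(G[W])$, but it is somewhat longer than the paper's greedy-colouring shortcut. Both routes are sound; the paper's buys brevity from the standard bound $\alpha_k\le(k+1)\alpha$, while yours avoids invoking colourability and makes the ``partial cliques cannot beat whole cliques'' phenomenon explicit.
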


\begin{proof}
(1) Let $(G,p)$ be an instance of the MIS-problem. We will reduce this problem to an instance of the kMIS-problem. To this end, we construct a graph $H$ the following way. Replace each vertex $v \in V(G)$ by a copy of $K_{k+1}$ and denote this copy by $K_v$. Join all vertices from $K_u$ and $K_v$ by an edge if $u$ and $v$ are adjacent in $G$. We denote the obtained graph $H$ by $G_{k+1}$. Now we prove the following claims.\\
{\it Claim 1: $\alpha(H) = \alpha(G)$.}\\
Let $S$ be a maximum independent set in $G$. For each $v \in V(G)$, select a  vertex $v^* \in V(K_v)$. Then the set $S^* = \{ v^* \;:\; v \in S\}$ is an independent set in $H$ and thus $\alpha(H) \ge \alpha(G)$. Let now $U$ be a maximum independent set in $H$. Then $U$ contains at most one vertex from each $K_v$ and thus $U^* = \{u \;:\; U \cap V(K_u) \neq \emptyset\}$ is an independent set in $G$. Hence, we have $\alpha(G) \ge \alpha(H)$ and we are done. \\
{\it Claim 2: $\alpha_k(H) = (k+1) \alpha(G)$.}\\
Let $S$ be a maximum $k$-independent set of $H$. Since $H[S]$ has maximum degree at most $k$, $\chi(H[S]) \le k+1$ and thus $S$ can be split into $k+1$ independent sets. Hence, $\alpha_k(H) \le (k+1) \alpha(H)$ and, by Claim 1, $\alpha_k(H) \le (k+1)\alpha(G)$. On the other hand, let $S$ be a maximum independent set of $G$ and let $S^* = \cup_{v \in S} V(K_v)$. Then $S^*$ is a $k$-independent set of $H$ which yields $\alpha_k(H) \ge (k+1) \alpha(G)$.\\
By Claim 2, solving the MIS-problem for $(G, p)$ can be reduced to an instance $(H, (k+1)p)$ of the kMIS-problem.\\
(2) As it is mentioned above, the MIS-problem is NP-complete for $r$-regular graphs already for $r \ge 3$. Because of the above reduction and since $G_{k+1}$ is $((k+1)r+k)$-regular provided $G$ is $r$-regular, $r \ge 3$, the reg-kMIS-problem is NP-complete for $((k+1)r+k)$-regular graphs.
\end{proof}

However, when we restrict our attention to a hereditary family of graphs (i.e. closed under induced subgraphs), the situation may look different.

\begin{prop}\label{thm-hereditary}
Let $\mathcal{F}$ be a hereditary family of graphs for which finding the $k$-independence number $\alpha_k(G)$ is solvable in polynomial time, say $p(n)$, for each $G \in \mathcal{F}$ of order $n$. Then finding the regular $k$-independence number $\kreg(G)$ is also solvable in polynomial time $\mathcal{O}(np(n))$ for each $G \in \mathcal{F}$ of order $n$.
\end{prop}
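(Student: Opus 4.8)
The plan is to exploit the decomposition of $\kreg(G)$ over degree classes already recorded in the introduction, namely $\kreg(G) = \max\{\alpha_{k,j}(G) : \delta \le j \le \Delta\}$, and to observe that each term $\alpha_{k,j}(G)$ is nothing but an ordinary $k$-independence number of an induced subgraph of $G$. Writing $V_j$ for the set of vertices of degree exactly $j$ in $G$, a set $S \subseteq V_j$ is $k$-independent in the subgraph induced by the degree-$j$ vertices precisely when $G[S]$ has maximum degree at most $k$; hence $\alpha_{k,j}(G) = \alpha_k(G[V_j])$. So the whole task reduces to computing $\alpha_k$ on the subgraphs $G[V_\delta], G[V_{\delta+1}], \ldots, G[V_\Delta]$ and returning the largest value.

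The first step is to compute the degree of every vertex and to partition $V(G)$ into the classes $V_\delta, \ldots, V_\Delta$; this is a lower-order cost (reading the graph and bucketing the vertices by degree), and since the classes are pairwise disjoint the induced subgraphs $G[V_j]$ can all be assembled in $O(n^2)$ total time. Here is where the hypothesis that $\mathcal{F}$ is hereditary does the essential work: each $G[V_j]$ is an induced subgraph of $G \in \mathcal{F}$, so $G[V_j] \in \mathcal{F}$, and therefore the assumed algorithm computes $\alpha_k(G[V_j])$ in time $p(|V_j|) \le p(n)$. Running this algorithm on each class and taking the maximum yields $\kreg(G)$.

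For the running time, note that the classes $V_j$ are nonempty for at most $n$ values of $j$ (the possible degrees lie in $\{0,1,\ldots,n-1\}$, and in any case $\sum_j |V_j| = n$). Thus at most $n$ invocations of the $\alpha_k$ procedure are made, each costing at most $p(n)$, so the dominant term is $n\cdot p(n)$; adding the $O(n^2)$ preprocessing (absorbed since computing $\alpha_k$ already requires reading the input, so $p(n) = \Omega(n)$) gives the claimed bound $\mathcal{O}(n\,p(n))$.

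There is no genuine obstacle here, as the argument is essentially bookkeeping once the reduction $\alpha_{k,j}(G) = \alpha_k(G[V_j])$ is in place. The only point that must be stated carefully is the use of heredity: without closure of $\mathcal{F}$ under induced subgraphs, one could not guarantee that the polynomial-time algorithm for $\alpha_k$ applies to the graphs $G[V_j]$, and the reduction would break down. I would therefore flag that step explicitly.
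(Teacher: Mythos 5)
Your proposal is correct and follows essentially the same route as the paper: partition the vertices by degree, use heredity to run the assumed $\alpha_k$-algorithm on each induced subgraph $G[V_j]$, and bound the total cost by at most $n$ calls of cost $p(n)$ plus $\mathcal{O}(n^2)$ for assembling the subgraphs. The only cosmetic difference is that the paper phrases the construction of the $G[V_j]$ via principal submatrices of a degree-sorted adjacency matrix, which changes nothing substantive.
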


\begin{proof}
Arrange the adjacency matrix in non-decreasing order with respect to the degrees of the vertices (both in rows and columns). Then the induced subgraph $G_ j$ of the vertices of degree $j$ is given by the principal submatrix of the corresponding rows and columns of the vertices of degree $j$. Now, because of the hereditary property, $\alpha_k(G_ j)$ can be computed  in time $p(n_j) \le p(n)$, where $n_j = n(G_j)$ and $\sum_{j=0}^{n-1} n_j = n$.  So, taking into account the construction of $G_j$, we need, rather crudely, $\mathcal{O}(n_j^2 + p(n))$-time to compute $\kreg(G_j)$. Summing over all $j= 1, \ldots, n$, and using convexity, we get $\sum_{j=0}^{n-1} \mathcal{O}(n_j^2 + p(n)) =  \mathcal{O}(n^2) + np(n) = \mathcal{O}(np(n))$. 
\end{proof}

In view of Proposition \ref{thm-hereditary}, since the independent set problem ($k=0$) for claw-free graphs, perfect graphs, bounded tree-width graphs and boundend clique-width graphs can be solved in polynomial-time \cite{CoMaRo, Gro, GrCl, Sbi}, the regular independence number can be computed in polynomial time in all these cases as well. The same occurs with the maximal outerplanar graphs, for which the independence problem can be solved in linear time \cite{CoRo, EsGuWa}.

\section{Open problems}

We close this paper with the following open problems.

%Lemma \ref{lem-chi} is of interest of its own as it gives some non-trivial structural results about $k$-trees. Thus, it would be a desire to further improve the bound given in Lemma \ref{lem-chi} or to show its optimality. 

\begin{probl}
Let $G$ be a $k$-tree of order $n \ge k+t+2$, where $t \ge 0$ is an integer. Is the bound $\chi(V_{k+t}(G)) \le \frac{1}{2}(t^2+t+2)$ optimal or can it be improved?
\end{probl}

%Observe that, any improvement of Lemma \ref{lem-chi} would result in an improvement on the corresponding bound on $\reg(G)$.\\

%While the case of the best lower bound for $\kreg(G)$ is solved for every $k$ in the classes of forests and trees, already $\reg(G)$ is far from being solved for $k$-trees and in particular for outerplanar graphs, maximal outerplanar graphs and planar graphs. The connection established to fair dominating sets makes this problem more interesting, and any improvement of our lower bounds for $\reg(G)$ in these classes will require some new ideas. Also, it would be interesting to extend the bounds on $\reg(G)$ for $k$-trees to $\jreg(G)$, for $j >0$ and $k > 1$. Thus, we close with the following problems.

\begin{probl}
Improve upon the bounds on $\reg(G)$ given in Section 3.
\end{probl}

\begin{probl}
Improve upon the benchmark bounds on $\jreg(G)$ for $k$-trees when $j > 0$ and $k > 1$.
\end{probl}

\begin{probl}
Prove (or disprove) that the maximum regular $k$-independent set decision problem is NP-complete for all $r$-regular graphs, $r \ge \max\{3, k+1\}$.
\end{probl}

%\section*{Acknowledgement}
%We would like to express our gratefulness for the referees whose remarks and suggestions were fully incorporated and improved very much the final form of this paper.

\end{document}